\documentclass[a4paper,10pt]{amsart}
\usepackage{enumerate}
\usepackage{amsthm,amsmath,amssymb,graphics,graphicx,hyperref,epstopdf}
\usepackage{breqn}           
\setlength\textwidth{5.5in}

\setlength\oddsidemargin{.5in}
\setlength\evensidemargin{.5in}

\setlength\topmargin{0in}
\setlength\textheight{8in}

\title{Minimal Resolutions of Dominant and Semidominant Ideals}
\author{Guillermo C. Alesandroni}
\address{Department of Mathematics, Oklahoma State University, 401 Mathematical Sciences, Stillwater, OK 74078}
\email{guillea@okstate.edu}

\newtheorem{theorem}{Theorem}[section]
\newtheorem{proposition}[theorem]{Proposition}
\newtheorem{corollary}[theorem]{Corollary}
\newtheorem{lemma}[theorem]{Lemma}

\theoremstyle{definition}
\newtheorem{definition}[theorem]{Definition}
\newtheorem{remark}[theorem]{Remark}
\newtheorem{example}[theorem]{Example}

\DeclareMathOperator{\btwo}{b_2}
\DeclareMathOperator{\bone}{b_1}
\DeclareMathOperator{\bzero}{b_0}
\DeclareMathOperator{\bi}{b_i}
\DeclareMathOperator{\pd}{pd}
\DeclareMathOperator{\mdeg}{mdeg}
\DeclareMathOperator{\lcm}{lcm}
\DeclareMathOperator{\Ker}{Ker}
\DeclareMathOperator{\im}{Im}

\DeclareMathOperator{\reg}{reg}
\DeclareMathOperator{\hdeg}{hdeg}

\begin{document}
\maketitle
\begin{abstract}
 We construct the minimal resolutions of three classes of monomial ideals: dominant, 1-semidominant, and 2-semidominant ideals. The families of dominant and
 1-semidominant ideals extend those of complete and almost complete intersections. 
 We show that dominant ideals give a precise characterization of when the Taylor resolution is minimal, 1-semidominant ideals are Scarf, and the minimal 
 resolutions of 2-semidominant ideals can be obtained from their Taylor resolutions by eliminating faces and facets of equal multidegree, in 
 arbitrary order. We study the combinatorial properties of these classes of ideals and explain how they relate to generic ideals.
\end{abstract}
\section{Introduction}
In this paper we explore the minimal free resolutions of dominant, 1-semidominant and 2-semidominant ideals, three families of monomial ideals that are easy to describe and 
have strong combinatorial properties.

For over half a century mathematicians have tried to obtain the minimal resolutions of families of ideals in closed form with little success. A common mark
in the construction of these classes of ideals and their corresponding resolutions has been the use of a monomial ordering or, at least, an ordering of the
variables. Groebner bases, mapping cones, Borel ideals and the (usually nonminimal) Lyubeznik resolution [No,Pe,Me]  are some examples of this phenomenon.

Dominant, 1-semidominant and 2-semidominant ideals, as well as the technique that resolves them minimally, distinguish from the objects mentioned above in 
that they do not
require an ordering of the variables; instead, they are characterized by the exponents with which the variables appear in the factorization of the monomial
generators. The concept of dominance resembles the definition of generic ideal [BPS,BS] as we will explain in section 4.

We will show that the minimal free resolutions of these classes of ideals have some important properties. 
In particular, the Taylor resolution of a monomial ideal is minimal if and only if the ideal is dominant. 
In other words, dominant ideals give a full and explicit characterization of when the Taylor resolution is minimal.

The minimal resolutions of 1-semidominant ideals are also remarkably simple; they are given by the Scarf complex. 
Thus it would be fair to say that we know everything about them. Although not as easy to decode as in the first two cases, the minimal 
resolutions of 2-semidominant ideals can also be expressed in simple terms: informally speaking, they can be obtained from their Taylor resolutions
eliminating pairs of face and facet of equal multidegree in arbitrary order, until exhausting all possibilities.

The concepts of dominant and 1-semidominant ideal extend those of complete and almost complete intersection in a natural way, and the transition from 
dominant to 1-semidominant ideal is smooth. The latter definition is obtained from the former via a minor modification. However, the combinatorial 
properties of dominant and 1-semidominant ideals can be radically different. For instance, in section 5 we give a condition under which a dominant ideal 
and a 1-semidominant ideal (that look almost identical) have the largest and smallest possible projective dimensions, respectively.

The structure of this paper is as follows. In section 2, we adopt a few conventions, fix the notation that will be used frequently, and 
describe some background material. Section 3 is technical. There we develop the machinary that will be instrumental in the proofs of the results announced
above.

Sections 4, 5, and 6 deal with dominant, 1-semidominant, and 2-semidominant ideals, respectively. In particular, at the end of section 6 we describe 
all 2-semidominant ideals that are Scarf. In section 7, we explain how $p$-semidominant ideals, a generalization of 1- and 2-semidominant ideals, increase in 
complexity as the value of $p$ grows. We also discuss conditions under which the minimal resolution of a monomial ideal can be obtained from its Taylor 
resolution by eliminating faces and facets of equal multidegree in arbitrary order.

\section{Background and Notation}
We will assume that the reader is familiar with the concepts of simplicial resolution, Taylor resolution, Scarf complex and Scarf ideal. A good source to 
learn these prerequisites is [Me]. It would also be helpful (although not essential)
to be acquainted with the following suplementary material: generic ideals, complete intersections, Betti numbers, projective dimension and regularity. The 
canonical place to learn these topics is [Pe].

Regarding notation, it is convenient to fix some terminology to identify objects that will be used repeatedly. 
Throughout the paper $S$ represents a polynomial ring over an arbitrary field, in a finite number variables. The letter $M$ always denotes a monomial ideal
in $S$, while the symbol $\mathbb{T}_M$ is used to identify the Taylor resolution of $S/M$.

In addition to the nomenclature fixed above, we need to set the following convention. Following [Me], the Taylor resolution $\mathbb{T}_M$ will be viewed as a simplicial 
resolution rather than the more 
classical construction based on the exterior algebra over a field. With this interpretation, if $\Delta_M$ is the full simplex on $M$ we can identify
its sets $\{l_1,\ldots,l_p\}$ with the basis elements $[l_1,\ldots, l_p]$ of $\mathbb{T}_M$. Based on this correspondence and abusing the language, basis
elements of the form $[l_1,\ldots, l_p]$ and $[\l_1,\ldots,\widehat{l_i},\ldots, l_p]$ will be referred to as being face and facet. We will also say that 
$l_1,\ldots,l_p$ are contained in $[l_1,\ldots, l_p]$. The multidegrees of the basis elements $[\sigma]$ of $\mathbb{T}_M$, denoted $\text{mdeg}[\sigma]$, 
will be written as monomials. More precisely, if $[\sigma]=[l_1,\ldots, l_p]$, then $\mdeg[\sigma]=\lcm(l_1,\ldots,l_p)$.

Finally, whenever we mention the $j^{th}$ differential map of $\mathbb{T}_M$, we make reference to the map with domain the free module in homological
degree $j$. In other words, if $M=(l_1,\ldots,l_q)$, and
\[\mathbb{T}_M:0\rightarrow F_q\xrightarrow{f_q}\cdots\rightarrow F_j\xrightarrow{f_j}F_{j-1}\xrightarrow{f_{j-1}}\cdots\rightarrow F_0\xrightarrow{f_0}
 S/M\rightarrow0,\]
 the $j^{th}$ differential map of $\mathbb{T}_M$ is $f_j:F_j\rightarrow F_{j-1}$, where \[f_j\left([l_{r_1},\ldots, l_{r_j}]\right)=
 \sum_{i=1}^j(-1)^{i+1}\dfrac{\mdeg [l_{r_1},\ldots, l_{r_j}]}{\mdeg [l_{r_1},\ldots,\widehat{l_{r_i}},\ldots, l_{r_j}]}
 [l_{r_1},\ldots,\widehat{l_{r_i}},\ldots, l_{r_j}]\]
\section{Foundational Results}
The results in this section are foundational in character because they deal with the basic concepts of change of basis and consecutive cancellation, which
are a natural avenue leading to the minimal free resolution of a monomial ideal. Most of these results are known in some form to experts, yet we have decided
to include statements with full proofs because the material is essential to the development of this paper and, as far as we know, nobody has published these
particular facts with careful explanations.

The reader will find that the underlying ideas have the strong familiar flavor of linear algebra.
\begin{definition}
 Let $M$ be a monomial ideal and let 
\[0\rightarrow F_q\xrightarrow{f_q}\cdots\rightarrow F_{j+2}\xrightarrow{f_{j+2}}F_{j+1}\xrightarrow{f_{j+1}}F_j\xrightarrow{f_j}F_{j-1}\rightarrow\cdots
\rightarrow F_0\rightarrow S/M\rightarrow0\] 
be a free resolution of $S/M$.\\
Let $U=\{[u_1],\cdots,[u_h]\}$ be a basis of $F_{j+1}$ and let $V=\{[v_1],\cdots,[v_g]\}$ be a basis of $F_j$. Suppose $a_{rs}$ is an invertible entry of the
differential matrix
\[(f_{j+1})_{U,V}=\left(\begin{array}{ccccc}
a_{11}&\cdots&a_{1s}&\cdots&a_{1h}\\
\vdots&      &\vdots&      &\vdots\\
a_{r1}&\cdots&a_{rs}&\cdots&a_{rh}\\
\vdots&      &\vdots&      &\vdots\\
a_{g1}&\cdots&a_{gs}&\cdots&a_{gh}
\end{array}\right)\]
The change of basis $U'=\{[u_1]',\cdots,[u_h]'\}$, where $[u_s]'=[u_s]$ and $[u_i]'=[u_i]-\dfrac{a_{ri}}{a_{rs}}[u_s]$ for all $i\neq s$;
and $V'=\{[v_1]',\ldots,[v_g]'\}$, where $[v_r]'=\sum\limits_{i=1}^ga_{is}[v_i]$ and $[v_i]'=[v_i]$, for all $i\neq r$
 will be called \textbf{the standard change of basis} (around $a_{rs}$).
\end{definition}
\begin{lemma}
With the notation used in Definition 3.1, if we make a standard change of basis around $a_{rs}$, the following properties hold:
\begin{enumerate}[(i)]
\item $\mdeg[u_i]'=\mdeg[u_i]$, for all $i=1,...,h$; $\mdeg[v_i]'=\mdeg[v_i]$, for all $i=1,...,g$.
\item The differential matrix $(f_{j+1})_{U',V'}$ is of the form
\[(f_{j+1})_{U',V'}=\left(\begin{array}{ccccccc}
b_{1,1}&...&b_{1,s-1}&0&b_{1,s+1}&...&b_{1,h}\\
\vdots&   &\vdots&\vdots&\vdots&  &\vdots\\
b_{r-1,1}&...&b_{r-1,s-1}&0&b_{r-1,s+1}&...&b_{r-1,h}\\
0      &...&0          &1&0         &...&0\\
b_{r+1,1}&...&b_{r+1,s-1}&0&b_{r+1,s+1}&...&b_{r+1,h}\\
\vdots&     &\vdots&\vdots&         &   &\vdots\\
b_{g,1}&...&b_{g,s-1}&0&b_{g,s+1}&...&b_{g,h}
\end{array}\right)\]
\item Let $1\leq c\leq g$ and $1\leq d\leq h$. If $c\neq r$ and $d\neq s$, then $b_{cd}=a_{cd}-\dfrac{a_{rd}a_{cs}}{a_{rs}}$.
\item The differential matrix $(f_{j+2})_{T,U'}$ is obtained from $(f_{j+2})_{T,U}$ by turning the $s^{th}$ row into a row of zeros, and the differential
matrix $(f_j)_{V',W}$ is obtained from $(f_j)_{V,W}$ by turning the $r^{th}$ column into a column of zeros. (Here we assume that $T$ and $W$ are bases
of $F_{j+2}$ and $F_{j-1}$, respectively.)
\end{enumerate}
 \end{lemma}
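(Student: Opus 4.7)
The plan is to verify each claim by direct computation, keeping track of how the new basis elements are expressed in terms of the old ones (and vice versa) and then invoking the chain-complex identity $f_j\circ f_{j+1}=0$ when needed.

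For (i), I would first note that since $a_{rs}$ is an invertible element of $S$ (hence of the ground field) it has multidegree $0$, and therefore the relation $a_{rs}[v_r]$ appearing as a summand of $f_{j+1}([u_s])$ forces $\mdeg[v_r]=\mdeg[u_s]$. By the same reasoning $\mdeg(a_{ri})+\mdeg[v_r]=\mdeg[u_i]$, so the coefficient $a_{ri}/a_{rs}$ has multidegree $\mdeg[u_i]-\mdeg[u_s]$, which makes $[u_i]'=[u_i]-(a_{ri}/a_{rs})[u_s]$ homogeneous of multidegree $\mdeg[u_i]$. The same check shows each summand $a_{is}[v_i]$ of $[v_r]'$ has multidegree $\mdeg[u_s]=\mdeg[v_r]$.

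For (ii) and (iii), I would compute $f_{j+1}([u_s]')=f_{j+1}([u_s])=\sum_i a_{is}[v_i]=[v_r]'$, which gives the column $s$ described in the statement. For $i\neq s$, invert the basis change on $V$ to get $[v_r]=(1/a_{rs})\bigl([v_r]'-\sum_{k\neq r}a_{ks}[v_k]'\bigr)$ and $[v_k]=[v_k]'$ for $k\neq r$, and substitute into
\[
f_{j+1}([u_i]')=f_{j+1}([u_i])-\tfrac{a_{ri}}{a_{rs}}f_{j+1}([u_s])=\sum_k a_{ki}[v_k]-\tfrac{a_{ri}}{a_{rs}}[v_r]'.
\]
After collecting coefficients, the $[v_r]'$-coefficient collapses to $0$ and, for $c\neq r$, the $[v_c]'$-coefficient becomes $a_{ci}-a_{ri}a_{cs}/a_{rs}$, which simultaneously establishes the zero row $r$ in the new matrix and the formula in (iii).

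For (iv), fix bases $T=\{[t_k]\}$ of $F_{j+2}$ and $W=\{[w_l]\}$ of $F_{j-1}$, and write $(f_{j+2})_{T,U}=(c_{ik})$ and $(f_j)_{V,W}=(d_{kl})$. The rewriting $[u_s]=[u_s]'$ and $[u_i]=[u_i]'+(a_{ri}/a_{rs})[u_s]'$ for $i\neq s$ shows that $(f_{j+2})_{T,U'}$ agrees with $(f_{j+2})_{T,U}$ outside row $s$, while its $s$-th row has $k$-th entry $c_{sk}+\sum_{i\neq s}c_{ik}a_{ri}/a_{rs}$. The key move is then to read the identity $f_{j+1}\circ f_{j+2}=0$ row $r$ at a time: this gives $\sum_i c_{ik}a_{ri}=0$ for every $k$, which rearranges to exactly the negative of the expression above, so the $s$-th row of $(f_{j+2})_{T,U'}$ is zero. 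The argument for $(f_j)_{V',W}$ is analogous: columns $l\neq r$ are unchanged because $[v_l]'=[v_l]$, while $f_j([v_r]')=\sum_{k,l}a_{ls}d_{kl}[w_k]$, and the relations $f_j\circ f_{j+1}=0$ applied to $[u_s]$ give $\sum_l a_{ls}d_{kl}=0$ for every $k$, so column $r$ vanishes.

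The computations in (i)–(iii) are bookkeeping and should go through without surprises; the only substantive step is (iv), where the vanishing of the distinguished row/column is not a formal consequence of linear algebra but a genuine use of the fact that we began with a free resolution, so $f_j\circ f_{j+1}=0$ and $f_{j+1}\circ f_{j+2}=0$. That identity is what I expect to be the ``main obstacle,'' in the sense that it is the one nontrivial ingredient to identify among otherwise routine manipulations.
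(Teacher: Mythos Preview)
Your proposal is correct and follows essentially the same route as the paper: direct computation for (i)--(iii), and the chain-complex identities $f_{j+1}\circ f_{j+2}=0$ and $f_j\circ f_{j+1}=0$ for (iv). The only cosmetic differences are that in (ii)--(iii) the paper avoids explicitly inverting the $V$-change (since the $[v_r]$-coefficient is already seen to vanish, one can pass to $V'$ for free), and in the second half of (iv) the paper phrases the vanishing as $[v_r]'=f_{j+1}([u_s]')\in\Ker f_j$ rather than unpacking the matrix product; your version and theirs are the same computation in different coordinates.
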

\begin{proof}
 \item (i) This part is essentially a consequence of the fact that $f_{j+1}$ is a graded map of degree $0$.\\
First, notice that since $f_{j+1}([u_s])=\sum\limits_{i=1}^ga_{is}[v_i]$, we must have 
\[\mdeg[u_s]=\mdeg(a_{is}[v_i])=\mdeg a_{is}\mdeg[v_i]\text{, for all }i\]
In particular, since $\mdeg a_{rs}=1$, we have that $\mdeg[u_s]=\mdeg[v_r]$. On the other hand, 
\[\mdeg[u_s]=\mdeg(f_{j+1}([u_s])=\mdeg\left(\sum\limits_{i=1}^ga_{is}[v_i]\right)=\mdeg[v_r]'.\]
Combining these facts, we get that $\mdeg[v_r]'=\mdeg[v_r]$. In addition to this, it is clear that for all $i\neq r$, $\mdeg[v_i]'=\mdeg[v_i]$, 
 which proves the first part of (i).
 
Now given that $f_{j+1}([u_i])=\sum\limits_{p=1}^ga_{pi}[v_p]$, we must have that $\mdeg[u_i]=\mdeg\left(a_{pi}[v_p]\right)$, for all $i=1,...,h$ and $p=1,...,g$.
In particular, $\mdeg[u_i]=\mdeg\left(a_{ri}[v_r]\right)$. Therefore, \\
$\mdeg\left(\dfrac{a_{ri}}{a_{rs}}[u_s]\right)=\mdeg\left(\dfrac{a_{ri}}{a_{rs}}\right)\mdeg[u_s]=\mdeg a_{ri}\mdeg[v_r]=
\mdeg(a_{ri}[v_r])=\mdeg[u_i]$, which shows that $[u_i]'=[u_i]-\dfrac{a_{ri}}{a_{rs}}[u_s]$ is homogeneous and 
$\mdeg[u_i]'=\mdeg[u_i]$.
Finally, it is clear that $\mdeg[u_s]'=\mdeg[u_s]$.
\item (ii) $f_{j+1}([u_s]')=f_{j+1}([u_s])=\sum\limits_{i=1}^ga_{is}[v_i]=[v_r]'$. Therefore, the $s^{th}$ column of $(f_{j+1})_{U',V'}$ is as stated in the 
lemma.

On the other hand, for all $i\neq s$,
\begin{dmath*}
f_{j+1}([u_i]')=f_{j+1}\left([u_i]-\dfrac{a_{ri}}{a_{rs}}[u_s]\right)=f_{j+1}([u_i])-\dfrac{a_{ri}}{a_{rs}}f_{j+1}([u_s])=\sum\limits_{p=1}^ga_{pi}[v_p]-
\dfrac{a_{ri}}{a_{rs}}\sum\limits_{p=1}^ga_{ps}[v_p]=\sum\limits_{p\neq r}\left(a_{pi}-\dfrac{a_{ri}}{a_{rs}}a_{ps}\right)[v_p]+0[v_r]=\sum\limits_{p\neq r}
\left(a_{pi}-\dfrac{a_{ri}}{a_{rs}}a_{ps}\right)[v_p]'+0[v_r]'
\end{dmath*}
Hence, the $r^{th}$ row of $(f_{j+1})_{U',V'}$ is as stated.
\item (iii) If $c\neq r$ and $d\neq s$, we have
\begin{dmath*}
f_{j+1}([u_d]')=f_{j+1}\left([u_d]-\dfrac{a_{rd}}{a_{rs}}[u_s]\right)=\sum\limits_{i=1}^ga_{id}[v_i]-\dfrac{a_{rd}}{a_{rs}}\sum\limits_{i=1}^ga_{is}[v_i]=
\sum\limits_{i\neq c \ i\neq r }\left(a_{id}-\dfrac{a_{rd}}{a_{rs}}a_{is}\right)[v_i]+\left(a_{cd}-\dfrac{a_{rd}}{a_{rs}}a_{cs}
\right)[v_c]+0[v_r]=
\sum\limits_{i\neq c \ i\neq r}\left(a_{id}-\dfrac{a_{rd}}{a_{rs}}a_{is}\right)[v_i]'+\left(a_{cd}-
\dfrac{a_{rd}}{a_{rs}}a_{cs}\right)[v_c]'
\end{dmath*}
This implies that $b_{cd}=a_{cd}-\dfrac{a_{rd}a_{cs}}{a_{rs}}$.
\item (iv) We will denote with $A_{ip}$ the entries of $\left(f_{j+2}\right)_{T,U}$ and with $B_{ip}$ the entries of $\left(f_{j+2}\right)_{T,U'}$.
If $[t_p]$ is a basis element in $T$, $f_{j+2}([t_p])=\sum\limits_{i=1}^hA_{ip}.[u_i]$.\\
Given that for all $i\neq s$,  $[u_i]=[u_i]'+\dfrac{a_{ri}}{a_{rs}}[u_s]'$, it follows that
\begin{dmath*}
f_{j+2}([t_p])=\sum_{i\neq s} A_{ip}\left([u_i]'+\dfrac{a_{ri}}{a_{rs}}[u_s]'\right)+A_{sp}[u_s]'=
\sum_{i=1}^hA_{ip}[u_i]'+\left[\left(\sum_{i\neq s} A_{ip}\dfrac{a_{ri}}{a_{rs}}\right)+A_{sp}\right][u_s]'.
\end{dmath*}
This implies that, for all $i\neq s$, $B_{ip}=A_{ip}$. 

On the other hand, the entry $B_{sp}=\left(\sum\limits_{i\neq s} A_{ip}\dfrac{a_{ri}}{a_{rs}}\right)+A_{sp}$ must be zero, 
as we show below.\\
Since $\im f_{j+2}=\Ker f_{j+1}$, we must have $f_{j+1}\circ f_{j+2}([t_p])=0$; that is\\
\begin{dmath*}
\left(\begin{array}{c}0\\\vdots\\0\end{array}\right)=\left(f_{j+1}\right)_{U',V'}\left(f_{j+2}\right)_{T,U'}([t_p])=
\left(\begin{array}{ccccccc}
b_{1,1}&...&b_{1,s-1}&0&b_{1,s+1}&...&b_{1,h}\\
\vdots&   &\vdots&\vdots&\vdots&  &\vdots\\
b_{r-1,1}&...&b_{r-1,s-1}&0&b_{r-1,s+1}&...&b_{r-1,h}\\
0      &...&0          &1&0         &...&0\\
b_{r+1,1}&...&b_{r+1,s-1}&0&b_{r+1,s+1}&...&b_{r+1,h}\\
\vdots&     &\vdots&\vdots&         &   &\vdots\\
b_{g,1}&...&b_{g,s-1}&0&b_{g,s+1}&...&b_{g,h}
\end{array}\right)\left(\begin{array}{c}A_{1p}\\\vdots\\A_{s-1p}\\
\left(\sum\limits_{i\neq s} A_{ip}\dfrac{a_{ri}}{a_{rs}}\right)+A_{sp}\\
A_{s+1p}\\ \vdots \\A_{hp}\end{array}\right)
\end{dmath*}
Notice that the $s^{th}$ entry of the resulting column vector is 
$0=\left(\sum\limits_{i\neq s} A_{ip}\dfrac{a_{ri}}{a_{rs}}\right)+A_{sp}$.\\
This proves our statement regarding 
$\left(f_{j+2}\right)_{T,U'}$.

The proof of the second statement is as follows: for all $i\neq r$, $[v_i]'=[v_i]$, which means that $f_j([v_i]')=f_j([v_i])$. In turn, this implies that
the $i^{th}$ columns of $(f_j)_{V',W}$ and $(f_j)_{V,W}$ are equal. Finally, since $[v_r]'=f_{j+1}([u_s]')\subseteq \im f_{j+1}=\Ker f_j$,
we must have $f_j([v_r]')=0$, which means that the $r^{th}$ column of $(f_j)_{V',W}$ is a column of zeros, as stated.
\end{proof}

Lemma 3.2 has several important implications that we discuss next. We continue to use the notation introduced in that lemma.
\begin{remark}
 It is obvious that when we make a standard change of basis, some of the basis elements $[u_i]$ and $[v_i]$ change. However, since the free 
modules $S[u_i]$ and $S[u_i]'$ (respectively $S[v_i]$ and $S[v_i]'$) are isomorphic, and given that by Lemma 3.2 (i), $[u_i]$ and $[u_i]'$ (respectively
$[v_i]$ and $[v_i]'$) are abstract objects with the same multidegree, we can assume that the basis elements $[u_i]$ and $[v_i]$ do not change. 
 Therefore, after making a standard change of basis, we can interpret that we have two different representations
 \[\cdots\rightarrow\bigoplus S[u_i]\xrightarrow{\left(f\right)}\bigoplus S[v_i]\rightarrow\cdots\] and
 \[\cdots\rightarrow\bigoplus S[u_i]'\xrightarrow{\left(f\right)'}\bigoplus S[v_i]'\rightarrow\cdots\]
 of the same free resolution of $S/M$, or we can interpret that we have two representations
 \[\cdots\rightarrow\bigoplus S[u_i]\xrightarrow{\left(f\right)}\bigoplus S[v_i]\rightarrow\cdots\] and
 \[\cdots\rightarrow\bigoplus S[u_i]\xrightarrow{\left(f\right)'}\bigoplus S[v_i]\rightarrow\cdots\]
 of two different free resolutions of $S/M$. We will choose the second interpretation. This way, if we identify the basis of $\mathbb{T}_M$ with a simplicial
 complex, when we make a standard change of basis or a consecutive cancellation, the basis of the new resolution can be identified with a subset of the
 simplicial complex and we can still speak in terms of faces and facets.
\end{remark}
  
 \begin{remark}
  In the same fashion that we identified the differential map $f_{j+1}$ with the differential matrix $\left(f_{j+1}\right)_{U,V}=\left(a_{rs}\right)$, 
 we can identify the 
$s^{th}$ basis element $[u_s]$ of $F_{j+1}$ with the column vector $\left(\delta_{is}\right)$, where $\delta_{is}=0$ if $i\neq s$, and $\delta_{ss}=1$.
Similarly, the image $f_{j+1}\left([u_s]\right)=\sum_{i=1}^ga_{is}[v_i]$ of $[u_s]$ can be identified with the $s^{th}$ column vector 
$\left(f_{j+1}\right)_{U,V}.\left(\delta_{is}\right)=\left(a_{is}\right)$ of $\left(a_{rs}\right)$. Thus each entry $a_{rs}$ is the coefficient of $[v_r]$ when
$f_{j+1}\left([u_s]\right)$ is expressed in terms of the basis $V=\left\{[v_1],\ldots,[v_g]\right\}$. Notice that there is a bijective correspondence
between the entries $a_{rs}$ of $\left(f_{j+1}\right)_{U,V}$ and the ordered pairs $\left([u_s],[v_r]\right)$ of basis elements $[u_s]$ and $[v_r]$ in
homological degrees $j+1$ and $j$, respectively. This means that the entry $a_{rs}$ of $\left(f_{j+1}\right)_{U,V}$ can be written $a_{\tau\sigma}$, where
$[\sigma]$ is the $s^{th}$ basis element of $U$ and $[\tau]$ is the $r^{th}$ basis element of $V$. That is, instead of using subscripts that denote the 
number of row and column where the entry is placed, we can use subscripts that identify the basis elements that generate this entry. Most of the time we will
choose the notation $a_{\tau\sigma}$ over $a_{rs}$ and will say that $a_{\tau\sigma}$ is determined by $[\sigma]$ and $[\tau]$.
 \end{remark}
\begin{remark}
 Since $f_{j+1}$ is graded of degree $0$, if $a_{rs}\neq0$ we must have 
\[\mdeg[u_s]=\mdeg f_{j+1}\left([u_s]\right)=\mdeg\left(\sum_{i=1}^ga_{is}[v_i]\right)=\mdeg\left(a_{rs}[v_r]\right)=
\mdeg a_{rs}\mdeg[v_r]\]
Hence, $a_{rs}=0$ or $\mdeg a_{rs}=\dfrac{\mdeg[u_s]}{\mdeg[v_r]}$.\\
With the notation introduced in Remark 3.4:
$a_{\tau\sigma}=0$ or $\mdeg a_{\tau\sigma}=\dfrac{\mdeg[\sigma]}{\mdeg[\tau]}$.
In particular, if $a_{\tau\sigma}$ is invertible then $\mdeg[\sigma]=\mdeg[\tau]$.\\
Now let $b_{\tau\sigma}$ be the entry determined by $[\sigma]$ and $[\tau]$ in $\left(f_{j+1}\right)_{U',V'}$. Reasoning as before, we get
$b_{\tau\sigma}=0$ or $\mdeg b_{\tau\sigma}=\dfrac{\mdeg[\sigma]}{\mdeg[\tau]}$\\
(Informally speaking, the multidegrees of the entries do not change under standard changes of bases.) In particular, if $a_{\tau\sigma}$ is invertible then,
$b_{\tau\sigma}=0$ or $b_{\tau\sigma}$ is also invertible.
\end{remark}
 \begin{remark}
  It follows from Lemma 3.2 (ii) and (iv) that after making a standard change of basis around $a_{rs}$, it is possible to make the consecutive cancellation
$0\rightarrow S[u_s]'\rightarrow S[v_r]'\rightarrow0$.
With the interpretation we adopted in Remark 3.3 and the notation we introduced in Remark 3.4, the preceding observation can be restated as follows: 
after making a standard change of basis around $a_{\tau\sigma}$, the resulting resolution admits the consecutive cancellation
$0\rightarrow S[\sigma]\rightarrow S[\tau]\rightarrow0$.
 \end{remark}
 
 We close this section introducing the following terminology. After making a standard change of basis around an invertible entry $a_{\tau\sigma}$ of a 
 resolution $\mathbb{F}$, we obtain a new resolution $\mathbb{F'}$ such that 
 $\mathbb{F} = \mathbb{F'} \oplus (0\rightarrow S[\sigma]\rightarrow S[\tau]\rightarrow0)$. From now on,
 the consecutive cancellation $0\rightarrow S[\sigma]\rightarrow S[\tau]\rightarrow0$ will be called \textbf{standard cancellation}, and we will say that 
 $\mathbb{F'}$ is obained from $\mathbb{F}$ by means of a standard cancellation.
 
\section{Dominant Ideals}
We are ready to address the study of our first family of monomial ideals, the dominant ideals. This study includes the construction of their 
minimal free resolutions as well as an analysis of their combinatorial properties.
\begin{definition}
 Given a set $G$ of monomials in $S$, we say that
 \begin{itemize}
  \item An element $m\in G$ has a \textbf{dominant variable} $x$ (with respect to $G$) if for all 
 $m'\in G\setminus\{m\}$, the exponent with which $x$ appears in the factorization of $m$ is larger than the exponent with which $x$ appears in the 
 factorization of $m'$; that is, there exists a positive $k$ such that
$x^k\mid m$ and $x^k\nmid m'$, for all $m'\neq m$.
\item An element $m\in G$ is a \textbf{dominant monomial} (with respect to $G$) if it has a dominant variable.
\item The set $G$ is a \textbf{dominant set} if every $m\in G$ is dominant.
\item A monomial ideal $M$ is a \textbf{dominant ideal} if its minimal generating set is dominant.
 \end{itemize}
\end{definition}
\begin{example}
The ideals $M_1=(x^3y, xy^2z,xz^2)$ and $M_2=(wx,y^3,z^2)$ are dominant, while $M_3=(x^2,y^2,xy)$ is not.
\end{example}

Some comments are in order. First, notice that the concept of dominant monomial always depends on a reference set. For example, the ideal $M_3$
introduced above is not dominant because $xy$ is not dominant in the minimal generating set $\{x^2,y^2,xy\}$; however, $xy$ is dominant in the proper 
subset $\{x^2,xy\}$.

Second, the definitions of dominant ideal and generic ideal are based on properties of the exponents of the monomial generators. 
(Recall that an ideal is generic if no variable appears with the same nonzero exponent in more than one monomial generator.) Despite this similarity, 
dominant and generic ideals are generally different. In Example 4.2, for instance, $M_1$ is dominant but not generic, while
$M_3$ is generic but not dominant. 

Finally, observe that if a monomial ideal is a complete intersection, its monomial generators are dominant because
they do not have variables in common (such is the case with $M_2$). It follows
that the ideal itself is dominant. Thus, monomial complete intersections are a subset of the family of dominant ideals.

Let us now study some properties derived from the concept of dominance. The following lemma will be quoted often throughout this work. 
\begin{lemma}
Let $M$ be a monomial ideal with minimal generating set $G$. If $[\sigma_1]$ and $[\sigma_2]$ are two basis elements of $\mathbb{T}_M$ with 
$\mdeg[\sigma_1]=\mdeg[\sigma_2]$, then $[\sigma_1]$ and $[\sigma_2]$ contain the same dominant monomials of $G$.
\end{lemma}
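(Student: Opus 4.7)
The plan is to exploit the defining divisibility property of a dominant variable to turn the equation $\mdeg[\sigma_1]=\mdeg[\sigma_2]$ into a membership statement.

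First I would fix an arbitrary dominant monomial $m\in G$ and choose, using Definition 4.1, a positive integer $k$ and a variable $x$ such that $x^k\mid m$ and $x^k\nmid m'$ for every $m'\in G\setminus\{m\}$. I would then pick an arbitrary basis element $[\sigma]=[l_{r_1},\ldots,l_{r_p}]$ of $\mathbb{T}_M$ and prove the following equivalence: $m$ is contained in $[\sigma]$ if and only if $x^k\mid\mdeg[\sigma]$. The forward implication is immediate, because $m$ being one of the $l_{r_i}$ forces $x^k\mid m\mid\lcm(l_{r_1},\ldots,l_{r_p})=\mdeg[\sigma]$. For the reverse implication I would argue by contrapositive: if $m$ is not contained in $[\sigma]$, then every $l_{r_i}$ lies in $G\setminus\{m\}$, so none of them is divisible by $x^k$; since the exponent of $x$ in an lcm is the maximum of the exponents in the factors, $x^k\nmid\mdeg[\sigma]$.

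Once this equivalence is established, the conclusion is essentially automatic. Applying it to both $[\sigma_1]$ and $[\sigma_2]$, the hypothesis $\mdeg[\sigma_1]=\mdeg[\sigma_2]$ gives that $x^k\mid\mdeg[\sigma_1]$ iff $x^k\mid\mdeg[\sigma_2]$, hence $m$ is contained in $[\sigma_1]$ iff $m$ is contained in $[\sigma_2]$. Since $m$ was an arbitrary dominant monomial of $G$, this shows that $[\sigma_1]$ and $[\sigma_2]$ contain exactly the same dominant monomials of $G$.

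There is no real obstacle here; the proof is a one-step unwinding of the definitions of \emph{dominant variable} and of multidegree as an lcm. The only point that deserves some care is making sure that the \textbf{witness} exponent $k$ associated with $x$ is used in both directions (rather than simply the full exponent of $x$ in $m$), so that the non-membership case yields the strict statement $x^k\nmid m'$ uniformly over all $m'\neq m$ in $G$. With that observation in place, the argument above is complete.
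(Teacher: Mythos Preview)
Your proof is correct and follows essentially the same approach as the paper's: both use the witness exponent $x^k$ of a dominant variable to detect whether the dominant monomial $m$ appears in a face via divisibility of the lcm. The only difference is organizational---you first isolate the general equivalence ``$m\in[\sigma]$ iff $x^k\mid\mdeg[\sigma]$'' and then apply it symmetrically, whereas the paper argues directly that $m\in L_i$ forces $m\in L_j$---but the mathematical content is identical.
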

\begin{proof}
 Let $L_1$ and $L_2$ be the sets of monomials contained in $[\sigma_1]$ and $[\sigma_2]$, respectively. Then $\lcm(L_1)=\lcm(L_2)$.
 If neither $L_1$ nor $L_2$ contains dominant elements of $G$, there is nothing to prove. 
 
 Suppose now that one of these sets, call it $L_i$, contains a 
 dominant monomial $m$ of $G$. We will show that the other set, call it $L_j$, contains $m$ as well. Since $m$ has a dominant variable $x$, there is a
 positive $k$ such that $x^k\mid m$ and $x^k\nmid m'$, for all $m'$ in $G\setminus\{m\}$. In particular, $x^k\nmid m'$ for all $m'$ in $L_j\setminus\{m\}$.
 That is, $x^k\nmid \lcm(L_j\setminus\{m\})$. On the other hand, $x^k\mid \lcm(L_i)=\lcm(L_j)$.
 
 Hence, $L_j\neq L_j\setminus\{m\}$, which means that $m$ is in $L_j$. We have proven that each dominant element
 $m$ of $G$ which is in one of $[\sigma_1]$ and $[\sigma_2]$ is also contained in the other.
\end{proof} 
In the following theorem we construct the minimal resolutions of dominant ideals. This theorem yields, in addition, an explcit characterization of when 
the Taylor resolution is minimal.
\begin{theorem}
 Let $M$ be a monomial ideal. Then $\mathbb{T}_M$ is minimal if and only if $M$ is dominant.
 \end{theorem}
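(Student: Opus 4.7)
The plan is to reduce the statement to a combinatorial condition on multidegrees of faces of $\Delta_M$ and then apply Lemma 4.3. Recall that a graded free resolution is minimal exactly when every differential matrix has no unit entries. By the differential formula for $\mathbb{T}_M$ given in Section 2, the entry linking a face $[\sigma]$ to a facet $[\tau]\subset[\sigma]$ is $\pm\,\mdeg[\sigma]/\mdeg[\tau]$, which is a unit if and only if $\mdeg[\sigma]=\mdeg[\tau]$. So $\mathbb{T}_M$ is minimal if and only if no face and facet of $\Delta_M$ share a multidegree.

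For the implication ``$M$ dominant $\Rightarrow \mathbb{T}_M$ minimal,'' I would take any face $[\sigma]$ together with a facet $[\tau]=[\sigma]\setminus\{m\}$. Since $M$ is dominant, $m$ is a dominant monomial of the minimal generating set $G$. If we had $\mdeg[\sigma]=\mdeg[\tau]$, Lemma 4.3 would force $[\sigma]$ and $[\tau]$ to contain the same dominant monomials of $G$; but $m\in[\sigma]\setminus[\tau]$ is dominant, a contradiction. Hence every entry of every differential matrix of $\mathbb{T}_M$ is a non-unit.

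For the converse, I would argue contrapositively. Suppose $M$ is not dominant and pick a non-dominant $m\in G$. Since no variable of $m$ is dominant for $m$, for each variable $x$ appearing with exponent $k_x>0$ in $m$ there exists some $m_x\in G\setminus\{m\}$ with $x^{k_x}\mid m_x$. Form $\sigma=\{m\}\cup\{m_x : x\mid m\}$ and $\tau=\sigma\setminus\{m\}$. Each prime power appearing in the factorization of $m$ divides some $m_x\in\tau$, so $m\mid\lcm(\tau)$, and hence $\mdeg[\sigma]=\mdeg[\tau]$. The corresponding entry of the differential of $\mathbb{T}_M$ is therefore a unit, so $\mathbb{T}_M$ is not minimal.

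The argument is essentially bookkeeping; the only subtlety is translating the failure of $m$ to have a dominant variable into the existence of the witnesses $m_x$, after which the construction of $\sigma$ and the appeal to Lemma 4.3 are immediate.
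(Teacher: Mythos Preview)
Your proof is correct and follows essentially the same approach as the paper: both reduce minimality of $\mathbb{T}_M$ to the absence of face--facet pairs with equal multidegree, and both use Lemma~4.3 for the direction ``$M$ dominant $\Rightarrow \mathbb{T}_M$ minimal.'' The only difference is in the converse: the paper simply takes $\sigma=G$ and $\tau=G\setminus\{n\}$, observing directly that nondominance of $n$ gives $n\mid\lcm(G\setminus\{n\})$, whereas you build a smaller $\sigma$ from the witnesses $m_x$. Your construction is correct but unnecessarily elaborate; the paper's choice of the full simplex avoids the bookkeeping entirely.
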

 \begin{proof}
  $(\Rightarrow)$ Suppose that $M$ is not dominant. Then its minimal generating set $G$ contains a nondominant monomial $n$. Let $\sigma=G$ and 
  $\tau_m=G\setminus\{m\}$. This means that $n\mid\
  lcm\left(\tau_n\right)$ and thus, $\mdeg[\sigma]=\mdeg\left[\tau_n\right]$. So, the top differential map sends $[\sigma]\mapsto\sum\limits_
  {m\neq n} a_m\left[\tau_m\right]\pm 1 \left[\tau_n\right]$.
  Since the coefficient $\pm1$ of $\left[\tau_n\right]$ is invertible, $\mathbb{T}_M$ is not minimal, a contradiction.\\
  $(\Leftarrow)$ If $[\sigma]=[m_1,\ldots, m_j]$ and $[\tau_i]=
  [m_1,\ldots,\widehat{m_i},\ldots,m_j]$ for all $i$, then 
  \[f_j\left([\sigma]\right)=\sum_{i=1}^ja_{\tau_i\sigma}[\tau_i],\]
  where $a_{\tau_i\sigma}=(-1)^{i+1}\dfrac{\mdeg[\sigma]}{\mdeg[\tau_i]}$. Since $m_i$ is dominant, 
  it follows from Lemma 4.3 that 
  $a_{\tau_i\sigma}$ is not invertible. This means that the differential matrices of $\mathbb{T}_M$ do not have invertible entries and hence, $\mathbb{T}_M$ 
  is minimal.
   \end{proof}
   \begin{corollary}
    Dominant ideals are Scarf.
   \end{corollary}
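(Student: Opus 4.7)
The plan is to deduce the corollary directly from Lemma 4.3 together with Theorem 4.4. Recall that an ideal $M$ is Scarf precisely when its minimal free resolution is supported on the Scarf complex of $M$, i.e., on the subcomplex of $\Delta_M$ consisting of faces whose multidegree is not shared with any other face of $\Delta_M$. So I need to identify the minimal resolution of a dominant ideal with the subcomplex of faces of unique multidegree in the Taylor complex.

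First I would invoke Theorem 4.4 to replace the abstract minimal resolution of $M$ with the concrete object $\mathbb{T}_M$: since $M$ is dominant, $\mathbb{T}_M$ is already minimal. Thus it suffices to prove that every face $[\sigma]$ of $\mathbb{T}_M$ has multidegree distinct from the multidegree of every other face $[\sigma']$, which will show that the full Taylor complex coincides with the Scarf complex.

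To establish this uniqueness of multidegrees, I would argue by contradiction: suppose $[\sigma_1]\neq[\sigma_2]$ but $\mdeg[\sigma_1]=\mdeg[\sigma_2]$. Since $M$ is dominant, every element of its minimal generating set $G$ is a dominant monomial. Applying Lemma 4.3, the faces $[\sigma_1]$ and $[\sigma_2]$ must contain the same dominant elements of $G$. But every generator is dominant, so the set of generators contained in $[\sigma_1]$ equals the set of generators contained in $[\sigma_2]$; that is, $\sigma_1=\sigma_2$, contradicting the hypothesis.

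There is really no substantial obstacle here; the result is a clean two-line consequence of what has already been developed. The only point worth being careful about is the definitional one, namely that ``Scarf'' means the minimal resolution is the Scarf simplicial resolution, so that the argument reduces to showing that every multidegree in $\mathbb{T}_M$ is attained by a unique face. Once that reduction is made explicit, Lemma 4.3 and Theorem 4.4 do all the work.
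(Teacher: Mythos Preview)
Your proposal is correct and follows essentially the same route as the paper: use Lemma 4.3 to see that two basis elements with equal multidegree must contain the same dominant generators, hence coincide since every generator is dominant. The paper's proof is terser and does not explicitly cite Theorem 4.4, but your added remark that minimality of $\mathbb{T}_M$ is needed to identify it with the minimal resolution (and hence with the Scarf complex) is a fair clarification of what is implicit there.
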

   \begin{proof}
   If two basis elements $[\sigma_1],[\sigma_2]$ of $\mathbb{T}_M$ have the same multidegree, according to Lemma 4.3, they contain the same
   dominant monomials. Since all monomials of the minimal generating set are dominant, $[\sigma_1]=[\sigma_2]$.  
    \end{proof}
It follows from Lemma 4.3 that if $M$ is dominant, no facet $[\tau_i]$ of $[\sigma]$ has the same multidegree as $[\sigma]$. However, Corollary 4.5 shows 
that an even stronger statement is true: if $M$ is dominant, all basis elements of $\mathbb{T}_M$ have different multidegrees.
\begin{example}
Let $M=(x^2,xy,y^3)$. The Taylor resolution $\mathbb{T}_M$ of $S/M$ is
\small\[0\rightarrow S[x^2,xy,y^3]\xrightarrow{\left(
\begin{array}{c}
x \\
-1\\
y^2
\end{array}\right)}
\begin{array}{c}
S[xy,y^3]\\
\oplus\\
S[x^2,y^3]\\
\oplus\\
S[x^2,xy]
\end{array}
\xrightarrow{\left(
\begin{array}{ccc}
0&-y^3&-y\\
-y^2&0&x\\
x&x^2&0
\end{array}\right)}
 \begin{array}{c}
S[x^2]\\
\oplus\\
S[xy]\\
\oplus\\
S[y^3]
\end{array}
\xrightarrow{\left(
\begin{array}{ccc}
x^2&xy&y^3
\end{array}\right)}
S[\varnothing]\rightarrow S/M\rightarrow 0\]
\end{example}
Notice that $M$ is not a dominant ideal since $xy$ is nondominant. It follows from Theorem 4.4 that $\mathbb{T}_M$ is not minimal,
which is consistent with the fact that one of the differential matrices contains an invertible entry $-1$.\\
In contrast to the previous example, the next one contains a Taylor Resolution which is minimal.
\begin{example} 
Let $M=(x^2,xz,y^3)$. the Taylor resolution $\mathbb{T}_M$ of $S/M$ is
\small\[0\rightarrow S[x^2,xz,y^3]\xrightarrow{\left(
\begin{array}{c}
x \\
-z\\
y^3
\end{array}\right)}
\begin{array}{c}
S[xz,y^3]\\
\oplus\\
S[x^2,y^3]\\
\oplus\\
S[x^2,xz]
\end{array}
\xrightarrow{\left(
\begin{array}{ccc}
0&-y^3&-z\\
-y^3&0&x\\
xz&x^2&0
\end{array}\right)}
 \begin{array}{c}
S[x^2]\\
\oplus\\
S[xz]\\
\oplus\\
S[y^3]
\end{array}
\xrightarrow{\left(
\begin{array}{ccc}
x^2&xz&y^3
\end{array}\right)}
S[\varnothing]\rightarrow S/M\rightarrow 0\]
\end{example}
In this example, $M$ is dominant. According to Theorem 4.4, the Taylor Resolution $\mathbb{T}_M$ is minimal, which is consistent with the fact that none of the 
differential matrices contains invertible entries.\\

Having obtained the minimal free resolutions of the dominant ideals, we can now study the combinatorial properties of the family. We will adopt the 
following notation: $\reg\left(S/M\right)$, $\pd(S/M)$, and 
$\bi(S/M)$ will represent the regularity, projective dimension, and $i^{th}$ Betti number of $S/M$,  respectively.

\begin{theorem}(Regularity of Dominant Ideals)\\
 Let $M$ be a dominant ideal with minimal generating set $G=\{m_1,\ldots,m_q\}$.\\
 Let $h=\deg\left(\mdeg[m_1,\ldots, m_q]\right)$. Then 
 $\reg\left(S/M\right)=h-q$.
\end{theorem}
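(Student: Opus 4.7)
The plan is to exploit the minimality of the Taylor resolution for dominant ideals (Theorem 4.4), which makes the Betti numbers transparent. Since $\mathbb{T}_M$ is the minimal free resolution of $S/M$, we have
\[
\reg(S/M) \;=\; \max\bigl\{\deg(\mdeg[\sigma]) - |\sigma| : \emptyset \neq \sigma \subseteq G\bigr\},
\]
because the basis elements $[\sigma]$ of $\mathbb{T}_M$ in homological degree $|\sigma|$ sit in internal degree $\deg(\lcm(\sigma))$, and each of them contributes a nonzero graded Betti number. So I need to show this maximum equals $h - q$ and is attained by the top face $\sigma = G$.

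The lower bound $\reg(S/M) \geq h - q$ is immediate: the basis element $[m_1,\ldots,m_q]$ survives in $\mathbb{T}_M$ by minimality, contributing $\beta_{q,h}(S/M) \neq 0$. The content of the theorem is therefore the upper bound
\[
\deg(\lcm(\sigma)) - |\sigma| \;\leq\; h - q \qquad \text{for every } \sigma \subseteq G,
\]
equivalently $\deg(\lcm(G)) - \deg(\lcm(\sigma)) \geq q - |\sigma|$.

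The key combinatorial observation is that the dominant variables of distinct generators of a dominant ideal are themselves distinct. Indeed, if $m_i$ had dominant variable $x$ with exponent $e_x(m_i)$ strictly greater than in every other generator, and $m_j$ ($j \neq i$) also had $x$ as its dominant variable with exponent $e_x(m_j) > e_x(m_i)$, the two inequalities would contradict each other. Using this, for each $m_i \in G \setminus \sigma$, let $x_i$ be its dominant variable. Then $e_{x_i}(\lcm(G)) = e_{x_i}(m_i)$ strictly exceeds $e_{x_i}(m')$ for every $m' \in \sigma$, so $e_{x_i}(\lcm(G)) - e_{x_i}(\lcm(\sigma)) \geq 1$. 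Summing over the distinct variables $x_i$ (one per element of $G \setminus \sigma$) yields
\[
\deg(\lcm(G)) - \deg(\lcm(\sigma)) \;\geq\; |G \setminus \sigma| \;=\; q - |\sigma|,
\]
which is exactly the required inequality.

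I do not anticipate any serious obstacle; the only nontrivial point is the distinctness of dominant variables, and once that is in hand the degree estimate is an easy exponent-by-exponent computation. The whole argument is essentially a routine application of Theorem 4.4 plus this one combinatorial remark.
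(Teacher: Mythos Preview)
Your proposal is correct and follows essentially the same approach as the paper: both use minimality of $\mathbb{T}_M$ (Theorem 4.4) for the lower bound, and for the upper bound both exploit the fact that distinct generators have distinct dominant variables to force $\deg(\lcm)$ to grow by at least one per added generator. The only cosmetic difference is that the paper adds the missing generators one at a time, whereas you compare $\sigma$ to $G$ in a single step by summing the exponent gaps over the dominant variables of $G\setminus\sigma$.
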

\begin{proof}
 Since $[m_1,\ldots, m_q]$ is a basis element in homological degree $q$, it follows that $b_{qh}\neq 0$. Thus, $\reg\left(S/M\right)\geq h-q$. 
 We will prove that if $b_{ij}\neq 0$, then $h-q\geq j-i$, which will complete the proof.
 
 Let $[\sigma]=[m_{r_1},\ldots, m_{r_i}]$ be a basis element of $\mathbb{T}_M$ with $\deg\left(\mdeg[\sigma]\right)=j$. 
 Let $m\in G\diagdown\{m_{r_1},\ldots,m_{r_i}
 \}$. Since different monomial generators have different dominant variables, it follows that
 \[\deg\left(\mdeg[m_{r_1},\ldots, m_{r_i},m]\right)\geq \deg\left(\mdeg[m_{r_1},\ldots, m_{r_i}]\right)+1\]
 Then, after applying the preceding reasoning $q-i$ times, we get
 \begin{dmath*}
 h=\deg\left(\mdeg[m_1,\ldots, m_q]\right)=\deg\left(\mdeg[m_{r_1},\ldots, m_{r_i},m_{s_1},\ldots, m_{s_{q-i}}]\right)\geq
 \deg\left(\mdeg[m_{r_1},\ldots, m_{r_i}]\right)+(q-i)=j+q-i
 \end{dmath*} 
 This implies that $h-q\geq j-i$.
\end{proof}
\begin{corollary}(Characterization of the minimal Taylor Resolution)\\ 
Let $M$ be a monomial ideal minimally generated by $q$ monomials. The following statements are equivalent:
\begin{enumerate}[(i)]
 
 \item $\mathbb{T}_M$ is minimal.
\item M is dominant.
\item $\bi(S/M)={q\choose i}$ for all $i$.
\item $\pd(S/M)=q$.
\item The LCM lattice of M is boolean.
\end{enumerate}
\end{corollary}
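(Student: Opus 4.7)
The plan is to prove the cycle $(i) \Rightarrow (iii) \Rightarrow (iv) \Rightarrow (ii) \Rightarrow (i)$ and then handle $(ii) \Leftrightarrow (v)$ separately. The equivalence $(i) \Leftrightarrow (ii)$ is already Theorem 4.4, so that leg is free.

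For $(i) \Rightarrow (iii)$, I would simply observe that if $\mathbb{T}_M$ is minimal, then it is the minimal free resolution of $S/M$, so $\bi(S/M)$ equals the rank of $F_i$ in $\mathbb{T}_M$, which is $\binom{q}{i}$. For $(iii) \Rightarrow (iv)$, condition (iii) gives $b_q(S/M) = 1 \neq 0$, hence $\pd(S/M) \geq q$; and $\pd(S/M) \leq q$ always holds because $\mathbb{T}_M$ is a free resolution of length $q$.

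The main step is $(iv) \Rightarrow (ii)$, where the Section 3 machinery does the real work. I would argue contrapositively: suppose some generator $m_i$ is nondominant, so $m_i \mid \lcm(G \setminus \{m_i\})$, equivalently $\mdeg[\sigma] = \mdeg[\tau_i]$ for $[\sigma] = [m_1,\ldots,m_q]$ and $[\tau_i] = [m_1,\ldots,\widehat{m_i},\ldots,m_q]$. Then in the top differential the coefficient of $[\tau_i]$ is $\pm 1$, an invertible entry. A standard cancellation around this entry deletes the unique summand $S[\sigma]$ in homological degree $q$, producing a resolution of length strictly less than $q$. Hence $\pd(S/M) < q$, contradicting (iv).

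Finally, for $(ii) \Leftrightarrow (v)$, I would use Lemma 4.3 directly: if $M$ is dominant then any two basis elements of $\mathbb{T}_M$ with the same multidegree contain the same dominant monomials, hence coincide, so distinct subsets of $G$ yield distinct LCMs and the LCM lattice is boolean. Conversely, if the lattice is boolean then removing any $m_i$ from $G$ strictly decreases the LCM, so there exist a variable $x$ and an exponent $k$ with $x^k \mid m_i$ and $x^k \nmid m_j$ for every $j \neq i$; thus $m_i$ has a dominant variable. The main obstacle is $(iv) \Rightarrow (ii)$: the other implications only require reading off ranks of Taylor modules or unpacking definitions, whereas this one is where the combinatorial content lives, namely recognizing a nondominant generator as producing an invertible entry in the top differential and invoking the standard cancellation to collapse the top homological degree.
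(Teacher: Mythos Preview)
Your proposal is correct and follows essentially the same approach as the paper: the paper declares the equivalence of (i), (ii), (iii), (v) ``immediate'' and proves (iv) $\Rightarrow$ (i) by the same contrapositive cancellation argument you give for (iv) $\Rightarrow$ (ii) (routing through Theorem~4.4 to obtain the nondominant generator). Your explicit treatment of (ii) $\Leftrightarrow$ (v) just fills in what the paper leaves to the reader.
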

\begin{proof}
The equivalence of (i), (ii), (iii) and (v) is immediate, as is (iii) $\Rightarrow$ (iv).
We complete the proof by showing that (iv) $\Rightarrow$ (i).

Assume that the Taylor Resolution is not minimal. Then, by Theorem 4.4, $M$ is not dominant. Thus there exists a nondominant monomial $m$ in the
minimal generating set $G$ of $M$. Let $\sigma = G$ and $\tau = G\setminus\{m\}$. Then $m\mid \text{lcm}(\tau)$ and hence, 
$\mdeg[\sigma]=\mdeg[\tau]$. Since $[\sigma]$ and $[\tau]$ are face and facet in homological degrees $q$ and $q-1$ respectively, it follows
that the $q^{th}$ differential matrix $(d_q)$ of $\mathbb{T}_M$ contains an invertible entry.
After making a consecutive cancellation in homological degrees $q$ and $q-1$, we obtain a new resolution $\mathbb{F}$ of $S/M$.
But the rank of the free module in homological degree $q$ of $\mathbb{T}_M$ is 1, which implies that the rank of the free module in homological degree $q$ 
of $\mathbb{F}$ is 0. Hence, the length of $\mathbb{F}$ is less than $q$, a contradiction.
\end{proof}

The following two remarks are now trivial but show that dominant ideals are as good as we could expect. First, note that the Taylor 
resolution of $S/M$ agrees with the Scarf complex of $S/M$ if and only if $M$ is dominant. This is interesting because the Taylor resolution
is usually highly nonminimal, while the Scarf complex is often strictly contained in the minimal free resolution of $S/M$. Second, two dominant
ideals whose minimal generating sets have the same cardinality must have the same projective dimension and the same total Betti numbers. This is immediate
from Corollary 4.9 (iii) and (iv).
\section{Semidominant Ideals}
In this section we introduce the 
semidominant ideals by slightly modifying the definition of dominance in such a way that the resulting family does not overlap with the family of dominant
ideals and yet retains some of its rich properties.
\begin{definition}
Let $G$ be a set of monomials in $S$. We say that $G$ is \textbf{semidominant} if exactly one monomial of $G$ is not dominant. A monomial ideal $M$ is 
called a \textbf{semidominant ideal} if its minimal generating set is semidominant. When a semidominant set $G$ is expressed in the form 
$G=\{m_1,\ldots,m_q,n\}$ we will assume that $m_1,\ldots,m_q$ are dominant and $n$ is nondominant.
\end{definition}
\begin{example}
The ideals $M_1=(x^2,y^3, xy)$ and $M_2=(xy,z^2,yz)$ are semidominant, $M_3=(x^2z,y^3, yz^3)$ is dominant, and $M_4=(xy,yz,xz)$ 
is neither dominant nor semidominant.\\
\end{example}

Note that the concept of semidominance is obtained from that of dominance in the same way as the definition of almost complete intersection 
is derived from that of complete intersection; namely, by relaxing the defining conditions. In the next proposition we explain how the former concepts 
extend the latter.

\begin{proposition}
Monomial almost complete intersections are either dominant or semidominant ideals.
\end{proposition}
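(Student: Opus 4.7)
The plan is to unpack the definition of a monomial almost complete intersection, which we may take to mean a monomial ideal whose minimal generating set has the form $G = \{m_1, \ldots, m_q, n\}$, where $m_1, \ldots, m_q$ are pairwise coprime monomials (a monomial complete intersection) and $n$ is an additional minimal generator. With this in hand, I would check the $q+1$ generators for dominance with respect to $G$ and then split into cases based on whether $n$ itself happens to be dominant.

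First, I would show that each $m_i$ is dominant in $G$. For $j \neq i$, pairwise coprimeness forces every variable dividing $m_i$ to appear in $m_j$ with exponent $0$, so any variable of $m_i$ already beats the $m_j$'s automatically. Thus dominance of $m_i$ in $G$ reduces to finding a variable $x$ with $\deg_x(m_i) > \deg_x(n)$. If no such variable existed, we would have $\deg_x(m_i) \le \deg_x(n)$ for every $x$ dividing $m_i$, which means $m_i \mid n$, making $n$ redundant and contradicting the minimality of $G$. Hence each $m_i$ admits a dominant variable.

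It remains to handle $n$. Either $n$ is dominant with respect to $G$, in which case every element of $G$ is dominant and $M$ is a dominant ideal, or $n$ is not dominant, in which case $n$ is the unique nondominant element of $G$ (by the previous paragraph) and $M$ is semidominant by definition. This exhausts the two possibilities and proves the proposition.

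The argument is mostly bookkeeping once the definition is written out; the only real step is the reduction via pairwise coprimeness together with the observation that minimality of $G$ forbids $m_i \mid n$. That is the step I would expect to be the principal, though very mild, obstacle, since it is the one place where the hypothesis of \emph{minimal} generation is used in an essential way.
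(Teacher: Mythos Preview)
Your proof is correct and follows essentially the same approach as the paper: you use pairwise coprimeness of the $m_i$ to reduce dominance of each $m_i$ to a comparison with $n$ alone, and then invoke minimality (equivalently, $m_i \nmid n$) to find the needed dominant variable. The paper's argument is identical in substance, only more terse; your explicit case split on whether $n$ is dominant is left implicit there.
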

\begin{proof}
Let $M=(l_1,...,l_q,l)$ be a monomial almost complete intersection, where $l_1, ...,l_q$ form a regular sequence and hence have no variable in common. 
Note that for all $i$, $l_i\nmid l$. Then there is a variable $x_i$ that appears with a larger exponent in the factorization of $l_i$ than in that of $l$. 
Therefore, $x_i$ is a dominant variable for $l_i$, which means that $l_i$ is a dominant monomial.
\end{proof}

Observe that the two cases stated in the proposition are feasible (consider $M_2$ and $M_3$ in Example 5.2). Later, we will prove that semidominant ideals 
are Scarf which, combined with Corollary 4.5 and Proposition 5.3, implies that monomial almost complete intersections are Scarf too.

Now we are ready to construct the minimal free resolutions of semidominant ideals. The idea is simple: if $M$ is semidominant and we identify the basis of
$\mathbb{T}_M$ with the full simplex on $M$, we will prove that the basis of the minimal free resolution of $S/M$ can be obtained by eliminating pairs
$\left([\sigma],[\tau]\right)$ of face and facet of equal multidegree from the simplicial complex in arbitrary order until we exhaust all such pairs. We
begin with a lemma.
\begin{lemma}
 Let $M$ be a semidominant ideal. Let $\mathbb{F}$ be a free resolution of $S/M$ obtained from $\mathbb{T}_M$ by means of standard cancellations.
 If two basis elements of $\mathbb{F}$ have the same multidegree, then they are face and facet.
\end{lemma}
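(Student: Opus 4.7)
The plan is to reduce the statement to an essentially immediate application of Lemma 4.3. Two preliminary observations are needed: that a resolution $\mathbb{F}$ obtained from $\mathbb{T}_M$ by standard cancellations can be regarded as having its basis elements indexed by a subset of the faces of the full simplex on $M$, and that the multidegrees of these surviving basis elements coincide with the multidegrees they carried as basis elements of $\mathbb{T}_M$.

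First I would verify these two facts by induction on the number of standard cancellations used to pass from $\mathbb{T}_M$ to $\mathbb{F}$. The base case is immediate. For the inductive step, a single standard change of basis preserves all multidegrees by Lemma 3.2(i); the ensuing standard cancellation (Remark 3.6) removes one face/facet pair, and by the reinterpretation adopted in Remark 3.3 the surviving basis elements can be relabelled by their original faces $[\sigma]$. Thus the basis of $\mathbb{F}$ is a subset of the faces of the full simplex on $M$, and the multidegree of each such face is the lcm of the monomials it contains, exactly as in $\mathbb{T}_M$.

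Next, I would write the minimal generating set of $M$ as $G=\{m_1,\ldots,m_q,n\}$, with $m_1,\ldots,m_q$ dominant and $n$ the unique nondominant monomial. Let $[\sigma_1]$ and $[\sigma_2]$ be distinct basis elements of $\mathbb{F}$ with $\mdeg[\sigma_1]=\mdeg[\sigma_2]$. By the preceding step they are also distinct basis elements of $\mathbb{T}_M$ with equal multidegree, so Lemma 4.3 implies that they contain the same dominant monomials of $G$. Therefore the sets of monomials contained in $[\sigma_1]$ and $[\sigma_2]$ agree except possibly at $n$, and distinctness forces exactly one of the two to contain $n$. This exhibits $[\sigma_1]$ and $[\sigma_2]$ as a face/facet pair, as required.

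The main obstacle is conceptual rather than computational: one must take care to propagate the reinterpretation of Remark 3.3 across an entire \emph{sequence} of standard cancellations, so that every basis element of $\mathbb{F}$ retains a concrete combinatorial identity as a subset of $G$ and its multidegree in $\mathbb{F}$ is literally the lcm of that subset. Once this viewpoint is fixed the induction is routine, and the semidominant hypothesis enters only at the very end through the single observation that $G$ has exactly one nondominant element.
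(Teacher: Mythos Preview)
Your proposal is correct and follows essentially the same route as the paper: apply Lemma~4.3 to conclude the two basis elements share the same dominant monomials, then use the uniqueness of the nondominant generator $n$ to force them to be face and facet. The only difference is that you spell out more explicitly (via induction and Remark~3.3/Lemma~3.2(i)) why basis elements of $\mathbb{F}$ retain their simplicial labels and multidegrees, something the paper treats as already established in Section~3.
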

\begin{proof}
 Let $[\sigma]$ and $[\tau]$ be two basis elements of $\mathbb{F}$. If $\mdeg[\sigma]=\mdeg[\tau]$ then, according to Lemma 4.3, $[\sigma]$ and 
 $[\tau]$ contain the same dominant monomials, and thus they must differ in the nondominant monomials that define them. Since the minimal generating set of 
 $M$ contains exactly one nondominant monomial $n$, we conclude that one of these basis elements contains $n$ while the other does not. 
 That is, $[\sigma]$ and $[\tau]$ are face and facet.
\end{proof}
The next two results show that, in the context of semidominant ideals, the process of eliminating pairs of face and facet of equal multidegree is equivalent 
to that of making standard cancellations.

Note: We will say that two pairs of basis elements  $\left([\sigma],[\tau]\right)$ and $\left([\theta],[\pi]\right)$ of $\mathbb{T}_M$ 
are ``disjoint'' if $[\sigma]\neq[\theta],[\pi]$ and $[\tau]\neq[\theta],[\pi]$.

\begin{lemma}
 Let $M$ be a semidominant ideal. Let $\mathbb{F}$ be a free resolution of $S/M$ obtained from $\mathbb{T}_M$ by means of standard cancellations. Let
 $a_{\tau\sigma}$ and $a_{\pi\theta}$ be two invertible entries of $\mathbb{F}$, determined by two disjoint pairs of basis elements 
 $\left([\sigma],[\tau]\right)$ and $\left([\theta],[\pi]\right)$ of $\mathbb{F}$, respectively.\\
 Then after making the standard cancellation $0\rightarrow S[\sigma]\rightarrow S[\tau]\rightarrow0$ in $\mathbb{F}$, it is possible to make the 
 standard cancellation $0\rightarrow S[\theta]\rightarrow S[\pi]\rightarrow0$.
\end{lemma}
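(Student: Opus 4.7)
The plan is to show that after performing the standard change of basis around $a_{\tau\sigma}$, the entry $b_{\pi\theta}$ in the new resolution at the position determined by $[\theta]$ and $[\pi]$ is still invertible; the second standard cancellation then follows from Remark 3.6.

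First, I would extract the structural consequences of invertibility. Remark 3.5 forces $\mdeg[\sigma]=\mdeg[\tau]$ and $\mdeg[\theta]=\mdeg[\pi]$; Lemma 5.4 identifies each pair as a face and a facet; and Lemma 4.3, combined with the fact that $n$ is the only nondominant element of the minimal generating set, pins down $[\sigma]=[\tau]\cup\{n\}$ and $[\theta]=[\pi]\cup\{n\}$. Since $n$ belongs to $[\sigma]$ and $[\theta]$ but not to $[\tau]$ or $[\pi]$, the disjointness hypothesis collapses (in the same-matrix case) to the single condition $[\sigma]\neq[\theta]$. Moreover, Remark 3.5 applied to $b_{\pi\theta}$ forces it to be either $0$ or a nonzero scalar, so the whole problem reduces to proving $b_{\pi\theta}\neq 0$.

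Next, I would split on whether $a_{\tau\sigma}$ and $a_{\pi\theta}$ lie in the same differential matrix. If they lie in different matrices, Lemma 3.2(iv) says the standard change of basis around $a_{\tau\sigma}$ alters only the row indexed by $[\sigma]$ of $f_{j+2}$ and the column indexed by $[\tau]$ of $f_j$, and leaves every other entry of the neighboring matrices, as well as all more distant matrices, untouched. By disjointness $[\pi]\neq[\sigma]$ and $[\theta]\neq[\tau]$, so $b_{\pi\theta}=a_{\pi\theta}$ and we are done. If both entries lie in the same matrix, Lemma 3.2(iii) gives $b_{\pi\theta}=a_{\pi\theta}-\frac{a_{\tau\theta}\,a_{\pi\sigma}}{a_{\tau\sigma}}$, so it suffices to show $a_{\tau\theta}=0$.

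The main obstacle is the vanishing of $a_{\tau\theta}$. I would argue by contradiction: if $a_{\tau\theta}\neq 0$, Remark 3.5 forces $\mdeg[\tau]\mid\mdeg[\theta]$. Every monomial $m$ in $[\tau]=[\sigma]\setminus\{n\}$ is dominant, so its dominant power $x^k$ divides $\mdeg[\theta]=\lcm([\theta])$ only if $m\in[\theta]$; hence $[\tau]\subseteq[\theta]$. Since $|[\theta]|=|[\tau]|+1$ and $n\in[\theta]\setminus[\tau]$, we are forced into $[\theta]=[\tau]\cup\{n\}=[\sigma]$, contradicting disjointness. Thus $a_{\tau\theta}=0$, $b_{\pi\theta}=a_{\pi\theta}$ is invertible, and the cancellation $0\to S[\theta]\to S[\pi]\to 0$ is indeed possible. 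The delicate point throughout is that entries of $\mathbb{F}$ may genuinely change under standard cancellations, so the vanishing of $a_{\tau\theta}$ cannot simply be read off the Taylor matrix; it must be extracted from the multidegree discipline of Remark 3.5 together with the rigidity that dominance imposes on which multidegrees can divide which.
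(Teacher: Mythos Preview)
Your proof is correct and structurally mirrors the paper's: the same case split on whether $a_{\tau\sigma}$ and $a_{\pi\theta}$ lie in the same differential matrix, the same appeal to Lemma~3.2(iv) for the adjacent-matrix cases, and the same use of Lemma~3.2(iii) in the same-matrix case. The one genuine difference is how you finish the same-matrix case. The paper argues by contradiction from $b_{\pi\theta}=0$: this forces $a_{\pi\sigma}$ invertible, hence $\mdeg[\tau]=\mdeg[\pi]$, and then Lemma~5.4 says $[\tau]$ and $[\pi]$ are face and facet even though they sit in the same homological degree. You instead prove the stronger statement $a_{\tau\theta}=0$ directly, using that $\mdeg[\tau]\mid\mdeg[\theta]$ together with dominance forces $[\tau]\subseteq[\theta]$ and hence $[\theta]=[\sigma]$. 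Your route avoids invoking Lemma~5.4 a second time and yields a little more information (the off-diagonal entry actually vanishes, not merely fails to cancel $a_{\pi\theta}$); the paper's route is slightly more uniform in that it reuses Lemma~5.4 rather than rerunning a dominance argument by hand. Either way the conclusion is the same.
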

\begin{proof}
 $[\sigma]$ and $[\tau]$ are basis elements in homological degrees $j$ and $j-1$, respectively, for some $j$. Thus $a_{\tau\sigma}$ is an entry of the 
 differential matrix $\left(f_j\right)$ of $\mathbb{F}$. Similarly, $[\theta]$ and $[\pi]$ are basis elements in some homological degrees $k$ and $k-1$,
 and $a_{\pi\theta}$ is an entry of the differential matrix $\left(f_k\right)$ of $\mathbb{F}$.
 
 In order to prove the lemma, it is enough to show that after making the standard cancellation 
 $0\rightarrow S[\sigma]\rightarrow S[\tau]\rightarrow0$ in $\mathbb{F}$, the entry $a'_{\pi\theta}$ of the differential matrix $\left(f'_k\right)$ of the 
 new resolution $\mathbb{F}'$ is invertible.
 
 Given that only $\left(f_{j+1}\right)$, $\left(f_j\right)$ and $\left(f_{j-1}\right)$ are affected by the standard cancellation
 $0\rightarrow S[\sigma]\rightarrow S[\tau]\rightarrow0$, if $k\neq j-1,j,j+1$ then $a'_{\pi\theta}=a_{\pi\theta}$; that is, $a'_{\pi\theta}$ is invertible.
 Therefore, we only need to prove that $a'_{\pi\theta}$ is invertible in the following cases:\\
 $k=j$; $k=j-1$, and $k=j+1$.
 
 First, suppose $k=j$. Since $a_{\pi\theta}$ is invertible, $\mdeg[\pi]=\mdeg[\theta]$. Then  $a'_{\pi\theta}=0$ or  $a'_{\pi\theta}$ is invertible. 
 Let us assume that $a'_{\pi\theta}=0$.
 By Lemma 3.2 (iii), we have that $0=a'_{\pi\theta}=a_{\pi\theta}-\dfrac{a_{\pi\sigma}a_{\tau\theta}}{a_{\tau\sigma}}$. It follows that  
 $a_{\pi\theta}a_{\tau\sigma}= a_{\pi\sigma}a_{\tau\theta}$ and, since  $a_{\pi\theta}$ and $a_{\tau\sigma}$ are invertible, 
  $a_{\pi\sigma}$ and $a_{\tau\theta}$ must be invertible too. In particular, the fact that $a_{\pi\sigma}$ is invertible 
  implies that $\mdeg[\sigma]=\mdeg[\pi]$ which, combined with the hypothesis $\mdeg[\sigma]=\mdeg[\tau]$, implies that $\mdeg[\tau]=\mdeg[\pi]$.
 It follows from Lemma 5.4 that one of $[\tau]$ and $[\pi]$ is a face and the other is its facet. Then they must appear in consecutive homological degrees,
  which is absurd because $k=j$. We conclude that $a'_{\pi\theta}$ is invertible.
  
  Now suppose $k=j-1$. In this case $[\tau]$ and $[\theta]$ appear in homological degree $j-1$. Let $[\tau]$ and $[\theta]$ be the $r^{th}$ and $s^{th}$ basis elements,
  respectively. It follows from Lemma 3.2 iv) that after making the standard cancellation $0\rightarrow S[\sigma]\rightarrow S[\tau]\rightarrow0$, the 
  matrix $\left(f'_{j-1}\right)$ of the new resolution $\mathbb{F}'$ is obtained from $\left(f_{j-1}\right)$ by eliminating its $r^{th}$ column. Since 
  the entry $a'_{\pi\theta}$ is placed in the $s^{th}$ column of $\left(f'_{j-1}\right)$, we have that $a'_{\pi\theta}=a_{\pi\theta}$; that is, 
  $a'_{\pi\theta}$ is invertible.
  
  Finally, suppose $k=j+1$. In this case $[\sigma]$ and $[\pi]$ appear in homological degree $j$. Let $[\sigma]$ and $[\pi]$ be the $u^{th}$ and $v^{th}$ basis elements,
  respectively. It follows from Lemma 3.2 iv) that after making the standard cancellation $0\rightarrow S[\sigma]\rightarrow S[\tau]\rightarrow0$, the 
  matrix $\left(f'_{j+1}\right)$ of the new resolution $\mathbb{F}'$ is obtained from $\left(f_{j+1}\right)$ by eliminating its $u^{th}$ row. Since 
  the entry $a'_{\pi\theta}$ is placed in the $v^{th}$ row of $\left(f'_{j+1}\right)$, we have that $a'_{\pi\theta}=a_{\pi\theta}$; that is, 
  $a'_{\pi\theta}$ is invertible.
 \end{proof}
\begin{theorem}
 Let $M$ be a semidominant ideal. Let $\left([\sigma_1],[\tau_1]\right),\ldots,\left([\sigma_k],[\tau_k]\right)$ be $k$ pairs of basis elements of 
 $\mathbb{T}_M$, satisfying the following properties:
 \begin{enumerate}[(i)]
  \item $\left([\sigma_i],[\tau_i]\right)$ and $\left([\sigma_j],[\tau_j]\right)$ are disjoint, if $i\neq j$.
 \item $[\tau_i]$ is a facet of $[\sigma_i]$, for all $i=1,\ldots, k$.
 \item $\mdeg[\sigma_i]=\mdeg[\tau_i]$, for all $i=1,\ldots, k$.
 \end{enumerate}
 
 Then, starting with $\mathbb{T}_M$ it is possible to make the following sequence of standard cancellations:
 \[0\rightarrow S[\sigma_1]\rightarrow S[\tau_1]\rightarrow0,\quad\cdots\quad,0\rightarrow S[\sigma_k]\rightarrow S[\tau_k]\rightarrow0\]
\end{theorem}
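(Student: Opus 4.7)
The plan is to induct on $k$, maintaining at each stage the invariant that every pair not yet cancelled continues to determine an invertible entry in the current resolution. Lemma 5.5 is the tool that propagates this invariant across a single cancellation, so the theorem reduces to tidy bookkeeping once that lemma is in hand.

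For the base case (no cancellations performed, current resolution equal to $\mathbb{T}_M$), I observe that each pair $([\sigma_i],[\tau_i])$ determines an invertible entry of the Taylor differential. Indeed, hypothesis (ii) says $[\tau_i]$ is a facet of $[\sigma_i]$, and hypothesis (iii) gives $\mdeg[\sigma_i]=\mdeg[\tau_i]$, so the explicit Taylor-differential formula recalled in Section 2 yields
\[a_{\tau_i\sigma_i}=\pm\frac{\mdeg[\sigma_i]}{\mdeg[\tau_i]}=\pm 1,\]
which is a unit.

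For the inductive step, suppose the first $i$ cancellations have been performed successfully, producing a resolution $\mathbb{F}_i$ in which each remaining pair $([\sigma_j],[\tau_j])$ with $j>i$ still determines an invertible entry. The invariant guarantees that $a_{\tau_{i+1}\sigma_{i+1}}$ is invertible in $\mathbb{F}_i$, so by Remark 3.6 we may perform the standard cancellation $0\to S[\sigma_{i+1}]\to S[\tau_{i+1}]\to 0$, obtaining $\mathbb{F}_{i+1}$. For each $j>i+1$, hypothesis (i) tells us that $([\sigma_{i+1}],[\tau_{i+1}])$ and $([\sigma_j],[\tau_j])$ are disjoint, so Lemma 5.5 applies to the two invertible entries $a_{\tau_{i+1}\sigma_{i+1}}$ and $a_{\tau_j\sigma_j}$ and shows that after the cancellation the entry corresponding to $([\sigma_j],[\tau_j])$ remains invertible in $\mathbb{F}_{i+1}$. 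This re-establishes the invariant, and after $k$ steps we have carried out the claimed sequence of cancellations.

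The genuine difficulty is not in this induction but in Lemma 5.5, which I use as a black box: it absorbs the case analysis on the relative homological position of the two pairs, and through Lemma 5.4 exploits the defining feature of semidominance (only one nondominant generator) to rule out the one arithmetic configuration that would force a preserved entry to collapse to zero. With that lemma granted, the present theorem is a clean iteration.
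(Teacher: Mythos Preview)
Your proof is correct and follows essentially the same route as the paper: induction using Lemma 5.5 as the engine that preserves invertibility across a single cancellation. The paper's induction is organized slightly differently---it applies the inductive hypothesis to two overlapping $(j-1)$-tuples to conclude that after the first $j-2$ cancellations both remaining entries are invertible, then invokes Lemma 5.5 once---whereas you maintain the stronger running invariant that \emph{all} remaining entries stay invertible; this is a minor repackaging with identical mathematical content.
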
 
\begin{proof}
 The proof is by induction on $k$.\\
 If $k=2$, the statement holds by Lemma 5.5, with $\mathbb{F}=\mathbb{T}_M$. (The fact that $a_{\tau_1\sigma_1}$ and $a_{\tau_2\sigma_2}$ are invertible 
 follows from the fact that in $\mathbb{T}_M$ faces and facets of equal multidegree always determine an invertible entry.)
 
 Assume that the theorem holds for $k=j-1$. Let $k=j$. Then it is possible to make either of the following two sequences of standard cancellations:
 \[0\rightarrow S[\sigma_1]\rightarrow S[\tau_1]\rightarrow0,\quad\cdots\quad,0\rightarrow S[\sigma_{j-1}]\rightarrow S[\tau_{j-1}]\rightarrow0\]
 and
 \[0\rightarrow S[\sigma_1]\rightarrow S[\tau_1]\rightarrow0,\quad\cdots\quad,0\rightarrow S[\sigma_{j-2}]\rightarrow S[\tau_{j-2}]\rightarrow0,
 0\rightarrow S[\sigma_j]\rightarrow S[\tau_j]\rightarrow0.\]
 This means that after making the first $j-2$ cancellations
 \[0\rightarrow S[\sigma_1]\rightarrow S[\tau_1]\rightarrow0,\quad\cdots\quad,0\rightarrow S[\sigma_{j-2}]\rightarrow S[\tau_{j-2}]\rightarrow0\]
either of the following two cancellations can be made: \[0\rightarrow S[\sigma_{j-1}]\rightarrow S[\tau_{j-1}]\rightarrow0\] and \[0\rightarrow S[\sigma_j]
\rightarrow S[\tau_j]\rightarrow0.\]

In other words, after making the first $j-2$ standard cancellations, we obtain a free resolution $\mathbb{F}$, where the entries 
$a_{\tau_{j-1}\sigma_{j-1}}$ and $a_{\sigma_j\tau_j}$ determined by $\left([\sigma_{j-1}],[\tau_{j-1}]\right)$ and $\left([\sigma_j],[\tau_j]\right)$,
respectively, are invertible. Therefore, it follows from Lemma 5.5, that after making the cancellation $0\rightarrow S[\sigma_{j-1}]\rightarrow 
S[\tau_{j-1}]\rightarrow0$, the cancellation $0\rightarrow S[\sigma_j]\rightarrow S[\tau_j]\rightarrow0$ is still possible.
\end{proof}
 Note. In Proposition 5.6, the pairs $\left([\sigma_1],[\tau_1]\right),\ldots,\left([\sigma_k],[\tau_k]\right)$ are indistinguishable,
 which implies that the standard cancellations can be made in arbitrary order.

\begin{lemma}
 Let $M=(m_1,\ldots,m_q,n)$ be a semidominant ideal. Let \\
 $A=\left\{\left([m_{i_1},\ldots, m_{i_j},n],[m_{i_1},\ldots, m_{i_j}]\right):n\mid\lcm(m_{i_1},\ldots,m_{i_j})\right\}$. 
 Then the following properties are satisfied:
 \begin{enumerate}[(i)]
\item If $\left([\sigma_1],[\tau_1]\right)$ and $\left([\sigma_2],[\tau_2]\right)$ are distinct ordered pairs of $A$, then they are disjoint.
\item $[\tau]$ is a facet of $[\sigma]$, for all $\left([\sigma],[\tau]\right)\in A$.
\item $\mdeg[\sigma]=\mdeg[\tau]$, for all $\left([\sigma],[\tau]\right)\in A$.
\item If $\left([\sigma],[\tau]\right)$ is an ordered pair of basis elements of $\mathbb{T}_M$ such that $[\tau]$ is a facet of $[\sigma]$ and 
 $\mdeg[\sigma]=\mdeg[\tau]$, then $\left([\sigma],[\tau]\right)\in A $.
 \end{enumerate}
 \end{lemma}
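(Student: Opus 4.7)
The plan is to verify the four properties by direct combinatorial inspection, invoking Lemma 4.3 only where it is genuinely needed (in part (iv)). The key structural observations are that every $[\sigma]$ in a pair of $A$ contains $n$, every $[\tau]$ does not, and the map $[\tau] \mapsto [\sigma] = [\tau] \cup \{n\}$ is a bijection between the two coordinates of each pair.

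For (i), I would note that if $([\sigma_1],[\tau_1])$ and $([\sigma_2],[\tau_2])$ are two pairs of $A$, then $[\sigma_1] \neq [\tau_2]$ and $[\tau_1] \neq [\sigma_2]$ automatically, since the $[\sigma]$'s always contain $n$ and the $[\tau]$'s never do. Now, since $[\tau_i] = [\sigma_i] \setminus \{n\}$, each coordinate determines the pair, so if the pairs are distinct, then $[\sigma_1] \neq [\sigma_2]$ and $[\tau_1] \neq [\tau_2]$ both hold, establishing disjointness as defined before Lemma 5.5. For (ii), this is immediate: $[\tau]$ is obtained from $[\sigma]$ by removing the single monomial $n$. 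For (iii), the hypothesis $n \mid \lcm(m_{i_1},\ldots,m_{i_j})$ means that adjoining $n$ does not change the lcm, so $\mdeg[\sigma] = \lcm(m_{i_1},\ldots,m_{i_j},n) = \lcm(m_{i_1},\ldots,m_{i_j}) = \mdeg[\tau]$.

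The only nontrivial step is (iv). Let $([\sigma],[\tau])$ be any pair of basis elements of $\mathbb{T}_M$ with $[\tau]$ a facet of $[\sigma]$ and $\mdeg[\sigma] = \mdeg[\tau]$. By Lemma 4.3, $[\sigma]$ and $[\tau]$ contain exactly the same dominant monomials of $G$. Since $[\tau]$ is a facet of $[\sigma]$, $[\sigma]$ contains exactly one monomial more than $[\tau]$; this extra monomial cannot be dominant, so it must be nondominant. Because $M$ is semidominant, its minimal generating set has a unique nondominant monomial, namely $n$, forcing the extra monomial to be $n$. Therefore $[\sigma] = [\tau] \cup \{n\}$ with $[\tau] = [m_{i_1},\ldots,m_{i_j}]$ consisting only of dominant monomials, and the equality $\mdeg[\sigma] = \mdeg[\tau]$ translates into $n \mid \lcm(m_{i_1},\ldots,m_{i_j})$, placing $([\sigma],[\tau])$ in $A$.

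I do not expect any real obstacle: parts (i)--(iii) are essentially bookkeeping from the definition of $A$, while part (iv) is a short deduction from Lemma 4.3 combined with the defining property of semidominance (uniqueness of the nondominant generator). The semidominance hypothesis is used only in (iv), and crucially it is what guarantees that the ``extra'' monomial identified via Lemma 4.3 must be $n$.
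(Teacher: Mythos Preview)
Your proposal is correct and follows essentially the same approach as the paper: parts (i)--(iii) are handled by the same direct observations (the $[\sigma]$'s contain $n$, the $[\tau]$'s do not, and each coordinate determines the pair), and part (iv) is proved exactly as in the paper via Lemma 4.3 together with the uniqueness of the nondominant generator. If anything, your write-up is slightly more explicit in (iv) about why the divisibility condition $n\mid\lcm(m_{i_1},\ldots,m_{i_j})$ follows, which the paper leaves implicit.
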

\begin{proof}
 (i) Since $[\sigma_1]$ and $[\sigma_2]$ contain $n$ and $[\tau_1]$ and $[\tau_2]$ do not contain $n$, it follows that $[\sigma_1]\neq[\tau_2]$ and
 $[\tau_1]\neq[\sigma_2]$. Let us assume that $[\sigma_1]=[\sigma_2]$. Then, by construction, $[\tau_1]=[\tau_2]$ and thus
 $\left([\sigma_1],[\tau_1]\right)=\left([\sigma_2],[\tau_2]\right)$, an absurd. Let us now assume that $[\tau_1]=[\tau_2]$. Then, by construction,
 $[\sigma_1]=[\sigma_2]$ and thus  $\left([\sigma_1],[\tau_1]\right)=\left([\sigma_2],[\tau_2]\right)$, a contradiction.\\
 (ii) Trivial.\\
 (iii) Since $n\mid\lcm\left(m_{i_1},\ldots,m_{i_j}\right)$, it follows that $\lcm\left(m_{i_1},\ldots,m_{i_j}\right)=
 \lcm\left(m_{i_1},\ldots,m_{i_j},n\right)$.\\
 (iv) If $\mdeg[\sigma]=\mdeg[\tau]$ then, by Lemma 4.3, $[\sigma]$ and $[\tau]$ contain the same dominant monomials, and therefore they differ in the 
 nondominant monomials that define them. But the minimal generating set of $M$ contains exactly one nondominant monomial and $[\tau]$ is a facet of $[\sigma]$, which implies
 that $[\sigma]$ and $[\tau]$ must be of the form $[\sigma]=[m_{i_1},\ldots, m_{i_j},n]$; $[\tau]=[m_{i_1},\ldots, m_{i_j}]$.
\end{proof}
\begin{theorem}
 Let $M=(m_1,\ldots,m_q,n)$ be a semidominant ideal. Let\\
 $A=\left\{\left([m_{i_1},\ldots, m_{i_j},n],[m_{i_1},\ldots, m_{i_j}]\right):n\mid\lcm(m_{i_1},\ldots,m_{i_j})\right\}$. Then 
 the minimal free resolution of $S/M$ can be obtained from $\mathbb{T}_M$ by doing all standard cancellations $0\rightarrow S[\sigma]\rightarrow S[\tau]
 \rightarrow0$, with $\left([\sigma],[\tau]\right)\in A$. In other words, if $\mathbb{F}$ is the minimal free resolution of $S/M$, then 
 \[\mathbb{T}_M=\mathbb{F}\oplus\left(\bigoplus_{\left([\sigma],[\tau]\right)\in A}0\rightarrow S[\sigma]\rightarrow S[\tau]\rightarrow0\right)\]
\end{theorem}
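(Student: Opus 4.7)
The plan is to combine Lemma 5.7 with Theorem 5.6 to perform all the cancellations at once, and then to establish minimality of the resulting resolution $\mathbb{F}$ by using Lemma 5.4 together with Lemma 5.7 (iv).

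The first step is to notice that parts (i), (ii), and (iii) of Lemma 5.7 are precisely the three hypotheses of Theorem 5.6 for the family of pairs in $A$: the pairs are pairwise disjoint, each $[\tau]$ is a facet of the corresponding $[\sigma]$, and $\mdeg[\sigma]=\mdeg[\tau]$. Theorem 5.6 then guarantees that, starting from $\mathbb{T}_M$, all the standard cancellations $0\rightarrow S[\sigma]\rightarrow S[\tau]\rightarrow 0$ with $([\sigma],[\tau])\in A$ can be carried out in arbitrary order. Using the interpretation adopted in Remark 3.3, this produces a resolution $\mathbb{F}$ whose basis elements are exactly the basis elements of $\mathbb{T}_M$ not occurring in any pair of $A$, and iteratively applying the definition of standard cancellation at the end of Section 3 yields the required decomposition
\[\mathbb{T}_M=\mathbb{F}\oplus\bigoplus_{([\sigma],[\tau])\in A}\left(0\rightarrow S[\sigma]\rightarrow S[\tau]\rightarrow0\right).\]

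The second step is to show that $\mathbb{F}$ is minimal. Suppose, aiming for a contradiction, that some differential matrix of $\mathbb{F}$ has an invertible entry $a_{\tau\sigma}$. By Remark 3.5, $\mdeg[\sigma]=\mdeg[\tau]$, and by construction $[\sigma]$ and $[\tau]$ lie in consecutive homological degrees. Since $\mathbb{F}$ is obtained from $\mathbb{T}_M$ by a sequence of standard cancellations, Lemma 5.4 applies and forces $[\sigma]$ and $[\tau]$ to be a face-facet pair; moreover, $[\tau]$ must be the facet of $[\sigma]$, as the invertible entry sits in the relevant direction in the differential. But $[\sigma]$ and $[\tau]$ are basis elements of $\mathbb{T}_M$ that survived all the cancellations in $A$, so neither appears in any pair of $A$. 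Lemma 5.7 (iv), applied to the pair $([\sigma],[\tau])$ in $\mathbb{T}_M$, then forces $([\sigma],[\tau])\in A$, a contradiction. Hence no invertible entries remain, so $\mathbb{F}$ is minimal.

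The main obstacle is not any single hard step but rather the bookkeeping required to reconcile the basis of $\mathbb{F}$ with that of $\mathbb{T}_M$ along the chain of cancellations, so that a statement about $\mathbb{T}_M$ such as Lemma 5.7 (iv) can be legitimately applied to surviving basis elements of $\mathbb{F}$. This identification is exactly what Remark 3.3 licenses, and once it is in place the argument above closes cleanly.
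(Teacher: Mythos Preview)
Your proof is correct and follows essentially the same approach as the paper: verify the hypotheses of Theorem 5.6 via Lemma 5.7 (i)--(iii), perform all cancellations, and then derive a contradiction to minimality using Lemma 5.4 followed by Lemma 5.7 (iv). The additional remarks you make about Remark 3.3 and Remark 3.5 and the explicit bookkeeping are helpful elaborations, but the logical skeleton is identical.
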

\begin{proof}
 Notice that the ordered pairs of $A$ satisfy the hypotheses of Theorem 5.6, by Lemma 5.7. Therefore, starting with
  $\mathbb{T}_M$, it is possible to make all standard cancellations $0\rightarrow S[\sigma]\rightarrow S[\tau]\rightarrow0$, with 
 $\left([\sigma],[\tau]\right)\in A$. We claim that the free resolution $\mathbb{F}$, obtained after making all these cancellations, is minimal.
 
 Let us assume that $\mathbb{F}$ is not minimal. Then there exists an invertible entry $a_{\tau\sigma}$ of $\mathbb{F}$, determined by two basis elements
 $[\sigma]$ and $[\tau]$ of $\mathbb{F}$. Hence, $[\sigma]$ and $[\tau]$ have the same multidegree. Thus by Lemma 5.4, $[\sigma]$ and $[\tau]$ are face and 
 facet. It follows from Lemma 5.7 (iv) that $\left([\sigma],[\tau]\right)\in A$, a contradiction. 
\end{proof}
\begin{corollary}
 Semidominant ideals are Scarf.
\end{corollary}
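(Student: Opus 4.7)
My plan is to derive this corollary as a direct consequence of Theorem 5.8 by identifying the basis of the minimal free resolution $\mathbb{F}$ of $S/M$ with the Scarf complex of $M$. Recall that $M$ is Scarf precisely when the Scarf complex supports its minimal free resolution, i.e., when the basis elements of $\mathbb{F}$ are exactly the faces of $\mathbb{T}_M$ whose multidegrees are unique among all faces of $\mathbb{T}_M$.

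First I would invoke Theorem 5.8 to conclude that the basis of $\mathbb{F}$ consists precisely of those faces of $\mathbb{T}_M$ that do not belong to any pair of the set
\[A=\left\{\left([m_{i_1},\ldots,m_{i_j},n],[m_{i_1},\ldots,m_{i_j}]\right):n\mid\lcm(m_{i_1},\ldots,m_{i_j})\right\}.\]
So the task reduces to showing that a face of $\mathbb{T}_M$ belongs to some pair of $A$ if and only if its multidegree coincides with that of another face of $\mathbb{T}_M$.

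The ``only if'' direction is immediate from Lemma 5.7 (iii), since every pair in $A$ consists of a face and facet of equal multidegree. For the ``if'' direction, suppose $[\sigma]$ and $[\pi]$ are distinct faces of $\mathbb{T}_M$ with $\mdeg[\sigma]=\mdeg[\pi]$. By Lemma 4.3 they contain the same dominant monomials, so they can only differ by the nondominant generator $n$; hence exactly one of them contains $n$. Without loss of generality $[\sigma]=[m_{i_1},\ldots,m_{i_j},n]$ and $[\pi]=[m_{i_1},\ldots,m_{i_j}]$, and the equality of multidegrees forces $n\mid\lcm(m_{i_1},\ldots,m_{i_j})$, placing $([\sigma],[\pi])$ in $A$.

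There is no serious obstacle: with Theorem 5.8 in hand, the bulk of the work has already been done, and the characterization of the Scarf complex in this setting follows almost tautologically from Lemma 4.3 together with the fact that precisely one generator of $M$ is nondominant. The key observation is simply that every multidegree collision in $\mathbb{T}_M$ arises between a face containing $n$ and the facet obtained by deleting $n$, and these are exactly the pairs cancelled in the passage from $\mathbb{T}_M$ to $\mathbb{F}$.
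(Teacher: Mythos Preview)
Your proposal is correct and follows essentially the same route as the paper. The paper's proof cites Lemma~5.4 and Lemma~5.7(iv) to show that any two basis elements of $\mathbb{T}_M$ with equal multidegree form a pair in $A$ and are therefore excluded from $\mathbb{F}$; you instead reprove these facts inline from Lemma~4.3, and you make the (trivial) converse direction explicit, but the underlying argument is identical.
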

\begin{proof}
 Let $M$ be a semidominant ideal. If $[\sigma]$ and $[\tau]$are basis elements of $\mathbb{T}_M$ and $\mdeg[\sigma]=\mdeg[\tau]$, then by 
 Lemma 5.4 we have that $[\sigma]$ and $[\tau]$ are face and facet. It follows from Lemma 5.7 (iv) and Theorem 5.8 that $[\sigma]$ and $[\tau]$ are excluded 
 from the minimal free resolution of $S/M$.
\end{proof}

Since the Scarf complex of an ideal is the intersection of all its minimal resolutions (as proved in  [Me]), it follows that all minimal resolutions of 
semidominant ideals have the same basis.

\begin{example}
 Let $M=(x^3y,y^2z,xz^2,xyz)$. Note that $M$ is semidominant, $xyz$ being the nondominant generator. By Corollary 5.9, $M$ is Scarf. Now, the multidegrees
 that are common to more than one basis element of $\mathbb{T}_M$ are $x^3y^2z$, $x^3yz^2$, $xy^2z^2$, and $x^3y^2z^2$ as one can determine by simple
 inspection. Hence, the basis of the minimal resolution $\mathbb{F}$ of $S/M$ is obtained from the basis of $\mathbb{T}_M$ by eliminating the elements that
 have one of the multidegrees mentioned above. This leads to the following resolution:
 \[\mathbb{F}:\quad 0\rightarrow 
\begin{array}{c}
S[x^3y,xyz] \\
\oplus\\
S[y^2z,xyz]\\
\oplus\\
S[xz^2,xyz]
\end{array}\xrightarrow{\left(f_2\right)}
\begin{array}{c}
S[x^3y]\\
\oplus\\
S[y^2z]\\
\oplus\\
S[xz^2]\\
\oplus\\
S[xyz]
\end{array}
\xrightarrow{\left(f_1\right)}
 S[\varnothing]\xrightarrow{\left(f_0\right)} S/M\rightarrow 0\] 
\end{example}

\begin{corollary}
 Let $M$ be a semidominant ideal with minimal generating set $G=\{m_1,\ldots,m_q,n\}$.
 \begin{enumerate}[(i)]
 \item The projective dimension of $S/M$ is the cardinality of the largest dominant subset of $G$ that contains $n$.
 \item Let $B_j=\left\{[m_{t_1},\ldots, m_{t_j}]: n\nmid\mdeg[m_{t_1},\ldots, m_{t_j}]\right\}$. Then the total Betti numbers are given by the formula
  \[\bi\left(S/M\right)=\#B_i+\#B_{i-1}\]
 \end{enumerate}
  \end{corollary}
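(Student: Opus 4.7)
The approach is to read off both statements directly from the description of the minimal resolution $\mathbb{F}$ provided by Theorem 5.8. That theorem tells us exactly which basis elements of $\mathbb{T}_M$ survive: a basis element $[m_{t_1},\ldots,m_{t_j}]$ of $\mathbb{T}_M$ (not containing $n$) is eliminated precisely when it appears as the second coordinate of some pair in $A$, which happens iff $n \mid \lcm(m_{t_1},\ldots,m_{t_j})$; a basis element $[m_{t_1},\ldots,m_{t_j},n]$ (containing $n$) is eliminated precisely when it appears as the first coordinate of some pair in $A$, which happens under the same condition. So in both cases, the surviving elements are those whose underlying subset of dominant generators $\{m_{t_1},\ldots,m_{t_j}\}$ satisfies $n \nmid \lcm(m_{t_1},\ldots,m_{t_j})$, i.e.\ belongs to $B_j$.

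For part (ii), I will simply count the surviving basis elements in each homological degree. In homological degree $i$, the surviving elements split into those not containing $n$, which are exactly the elements of $B_i$ (contributing $\#B_i$), and those containing $n$, which are in bijection (by deletion of $n$) with the elements of $B_{i-1}$ (contributing $\#B_{i-1}$). Since $\mathbb{F}$ is minimal, $\bi(S/M) = \#B_i + \#B_{i-1}$.

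For part (i), the projective dimension is the largest $i$ with $\bi(S/M) > 0$. If $J$ denotes the largest integer with $B_J \neq \varnothing$, then $\beta_{J+1}(S/M) = \#B_J > 0$ and $\beta_i(S/M) = 0$ for $i > J+1$, so $\pd(S/M) = J+1$. It remains to identify $J+1$ with the cardinality of the largest dominant subset of $G$ containing $n$. For this I will use two observations: (a) any $m_i \in G$ remains dominant in every subset of $G$ that contains it, since its dominant variable in $G$ is a fortiori dominant in any sub-collection; (b) for a subset $T \cup \{n\}$ of $G$ with $T \subseteq \{m_1,\ldots,m_q\}$, the monomial $n$ is dominant in $T \cup \{n\}$ iff some variable appears in $n$ with exponent strictly greater than in every element of $T$, iff $n \nmid \lcm(T)$. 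Combining (a) and (b), the dominant subsets of $G$ containing $n$ are exactly the sets $T \cup \{n\}$ with $T \in B_{|T|}$, so the largest such subset has size $J+1$.

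The only nonroutine step is observation (b) together with the fact that the $m_i$'s retain dominance in sub-collections; once this is pinned down, everything else is bookkeeping on top of Theorem 5.8. I expect no real obstacle beyond being careful with the edge case $i=0$ (where $B_0 = \{[\varnothing]\}$ and $B_{-1} = \varnothing$, correctly giving $\beta_0 = 1$).
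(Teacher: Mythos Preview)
Your proposal is correct and follows essentially the same route as the paper: both arguments read off the surviving basis elements directly from Theorem~5.8, using that a subset $T\cup\{n\}$ of $G$ is dominant iff $n\nmid\lcm(T)$. The only cosmetic difference is that you establish the Betti number count first and then derive the projective dimension from $\pd(S/M)=\max\{i:\beta_i>0\}$, whereas the paper argues part~(i) directly from the set $A$; the underlying content is the same.
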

\begin{proof}
 Let $\mathbb{F}$ and $A$ be as in Theorem 5.8.\\
 (i) Let $r=\max\left\{\#(D): D\text{ is a dominant subset of }G\text{ that contains }n\right\}$.\\ 
 Let $\left\{m_{t_1},\ldots,m_{t_{r-1}},n\right\}$
 be a dominant subset of $G$. Then $n\nmid\lcm\left(m_{t_1},\ldots,m_{t_{r-1}}\right)$. Thus $\left([m_{t_1},\ldots, m_{t_{r-1}},n],
 [m_{t_1},\ldots, m_{t_{r-1}}]\right)$ is not in $A$ and, therefore, $[m_{t_1},\ldots, m_{t_{r-1}},n]$ is a basis element of the minimal resolution
  $\mathbb{F}$. Thus, $\pd\left(S/M\right)\geq r$. Now, if $[\sigma]$ is a basis element of $\mathbb{T}$, in homological degree $k>r$, then $[\sigma]$
  must be of the form: $[\sigma]=[m_{s_1},\ldots, m_{s_k}]$ or $[\sigma]=[m_{s_1},\ldots, m_{s_{k-1}},n]$.
  
  If $[\sigma]=[m_{s_1},\ldots, m_{s_k}]$, then $\{m_{s_1},\ldots, m_{s_k},n\}$ cannot be dominant because its cardinality is larger than $r$. Hence, 
  $n\mid \lcm(m_{s_1},\ldots,m_{s_k})$, which means that $\left([m_{s_1},\ldots, m_{s_k},n],[\sigma]\right)\in A$, and thus $[\sigma]$ is not a basis
  element of $\mathbb{F} $. A similar reasoning shows that if $[\sigma]=[m_{s_1},\ldots, m_{s_{k-1}},n]$ then
  $\left([\sigma],[m_{s_1},\ldots, m_{s_{k-1}}]\right)\in A$, and thus 
  $[\sigma]$ is not a basis element of $\mathbb{F}$.
  
  Given that every basis element of $\mathbb{T}_M$ in homological degree $k>r$ is excluded from the basis of $\mathbb{F}$, we conclude that 
  $\pd\left(S/M\right)=r$.\\
  (ii) The basis elements of $\mathbb{T}_M$ in homological degree $i$ are of the form $[m_{s_1},\ldots, m_{s_{i-1}},n]$ or $[m_{t_1},\ldots, m_{t_i}]$. 
  Since the basis elements of $\mathbb{F}$ are obtained from the basis of $\mathbb{T}_M$ 
  by eliminating those elements which are the first or the second component of a pair $\left([\sigma],[\tau]\right)\in A$, it follows that the family 
  of basis elements of $\mathbb{F}$ in homological degree $i$ is:\\
  $\left\{[m_{t_1},\ldots, m_{t_i}]: n\nmid\lcm\left(m_{t_1},\ldots,m_{t_i}\right)\right\}\cup
   \left\{[m_{s_1},\ldots, m_{s_{i-1}},n]: n\nmid\lcm\left(m_{s_1},\ldots,m_{s_{i-1}}\right)\right\}$.\\
   The statement of part (ii) is now clear.
\end{proof}
\begin{corollary}
 Let $M=(m_1,\ldots,m_q,n)$ be a semidominant ideal. Then $\pd\left(S/M\right)=2$ if and only if for all $i\neq j$, $n\mid \lcm(m_i,m_j)$.
\end{corollary}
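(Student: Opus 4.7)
The plan is to bootstrap directly off Corollary 5.11(i), which identifies $\pd(S/M)$ with the maximum cardinality of a dominant subset of $G$ containing $n$. Under this characterization, $\pd(S/M)=2$ is equivalent to the combination of two conditions: some two-element subset of $G$ containing $n$ is dominant, and no three-element subset of $G$ containing $n$ is dominant.

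First I would dispose of the lower bound $\pd(S/M)\geq 2$. Since $G$ is a minimal generating set, $n\nmid m_1$, so there is a variable $x$ with $\exp_x(n)>\exp_x(m_1)$; taking $k=\exp_x(n)$ witnesses that $n$ is dominant in $\{n,m_1\}$. Similarly $m_1$ is dominant in $\{n,m_1\}$, so this two-element subset is dominant. Next, I would record the ``subset inheritance'' of dominance: if $m\in D'\subseteq D$ has dominant variable $x$ with respect to $D$, the same $x$ witnesses dominance of $m$ with respect to $D'$. This monotonicity implies that the existence of a dominant subset of $G$ containing $n$ of cardinality at least $3$ is equivalent to the existence of a three-element dominant subset of the form $\{n,m_i,m_j\}$ with $i\neq j$.

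It then remains to translate the dominance of $\{n,m_i,m_j\}$ into an lcm condition. Because $m_i$ and $m_j$ are dominant already in $G$, inheritance makes them dominant in $\{n,m_i,m_j\}$ automatically, so dominance of the triple is equivalent to $n$ having a dominant variable in it. Unwinding the definition, this says that there exist a variable $x$ and a positive integer $k$ with $x^k\mid n$ but $x^k\nmid m_i$ and $x^k\nmid m_j$; equivalently, $\exp_x(n)>\max(\exp_x(m_i),\exp_x(m_j))$ for some $x$; equivalently, $n\nmid\lcm(m_i,m_j)$. Thus $\{n,m_i,m_j\}$ fails to be dominant exactly when $n\mid\lcm(m_i,m_j)$, and combining with the reductions above yields $\pd(S/M)=2$ iff $n\mid\lcm(m_i,m_j)$ for all $i\neq j$.

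No step here presents a real obstacle; the entire argument is a clean unwinding of definitions layered on Corollary 5.11(i). The only point worth flagging is the inheritance observation, which is what lets us localize the problem to the explicit three-element subsets $\{n,m_i,m_j\}$ appearing in the statement.
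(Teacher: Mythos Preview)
Your proposal is correct and follows essentially the same approach as the paper: both invoke Corollary~5.11(i) to identify $\pd(S/M)$ with the largest dominant subset of $G$ containing $n$, then reduce to the three-element sets $\{m_i,m_j,n\}$ and observe that dominance of such a triple is equivalent to $n\nmid\lcm(m_i,m_j)$. You are simply more explicit than the paper about the inheritance of dominance under subsets and about why $\{m_1,n\}$ is dominant, but there is no substantive difference in strategy.
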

\begin{proof}
 $(\Rightarrow)$ If $\pd\left(S/M\right)=2$, then the largest dominant subset of $\{m_1,\ldots,m_q,n\}$ that contains $n$ has cardinality 2 
 (Corollary 5.11). Thus every set $\{m_i,m_j,n\}$ is nondominant, which implies that $n\mid \lcm(m_i,m_j)$.\\
 $(\Leftarrow)$ If $k\geq2$, then $n\mid \lcm(m_{i_1},\ldots,m_{i_k})$. Therefore, the set $D=\{m_{i_1},\ldots,m_{i_k},n\}$ is not dominant and, according
 to Corollary 5.11, $\pd\left(S/M\right)\leq 2$. Now, $\{m_1,n\}$ is dominant, so $\pd\left(S/M\right)=2$. 
\end{proof}

Corollary 5.12 is interesting because it tells us that an ideal $M$ may have maximum projective dimension (i.e., $\pd\left(S/M\right)=$ number of generators of 
$M$) and another ideal $M'$, obtained by adding one generator to the minimal generating set of $M$, may have minimum projective dimension (i.e., 
$\pd\left(S/M'\right)=2$). The next example illustrates this phenomenon.
\begin{example}
 Let $M=(v^2xyz,vw^2yz,vwx^2z,vwxy^2,wxyz^2)$, and $M'=(v^2xyz,vw^2yz,$\\$vwx^2z,vwxy^2,wxyz^2,vwxyz)$. Since $M$ is dominant, $\pd\left(S/M\right)=5$. 
 The semidominant ideal $M'$ obtained from $M$ by adding the generator $vwxyz$ satisfies the condition of Corollary 5.12
 and thus $\pd\left(S/M'\right)=2$.
\end{example}
\begin{corollary} 
Let $M$ be a semidominant ideal with minimal generating set 
$G=\{m_1,\ldots,$\\$m_q,n\}$. Then \[\reg\left(S/M\right)=\max\left\{\deg\left(\mdeg[\sigma]\right)-
\hdeg[\sigma]:\sigma\subset G\text{, }n\in\sigma\text{, and }\sigma\text{ is dominant}\right\}\]   
\end{corollary}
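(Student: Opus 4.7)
My plan is to compute $\reg(S/M)$ directly from the multigraded minimal free resolution $\mathbb{F}$ provided by Theorem 5.8, and then argue that it suffices to maximize over those basis elements that contain $n$.

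First I would recall that, for a multigraded minimal free resolution, $\reg(S/M) = \max\{\deg(\mdeg[\sigma]) - \hdeg[\sigma] : [\sigma] \text{ is a basis element of } \mathbb{F}\}$, since each such $[\sigma]$ contributes a nonzero summand to the graded Betti number $b_{\hdeg[\sigma],\,\deg(\mdeg[\sigma])}(S/M)$. By Theorem 5.8, the basis elements of $\mathbb{F}$ are precisely those $[\sigma]$ not occurring in any pair of $A$, and these split into two families: faces of the form $[m_{i_1},\ldots,m_{i_j}]$ with $n \nmid \lcm(m_{i_1},\ldots,m_{i_j})$, and faces of the form $[m_{i_1},\ldots,m_{i_j},n]$ satisfying the same divisibility condition.

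Next I would identify the second family with the dominant subsets of $G$ containing $n$. Given $\sigma = \{m_{i_1},\ldots,m_{i_j},n\}$ arising from a basis element of the second type, each $m_{i_\ell}$ is already dominant in the larger set $G$ and therefore remains dominant in $\sigma$, while the condition $n \nmid \lcm(m_{i_1},\ldots,m_{i_j})$ supplies a variable $x$ and exponent $k$ with $x^k \mid n$ and $x^k \nmid m_{i_\ell}$ for every $\ell$, exhibiting a dominant variable for $n$ in $\sigma$. Conversely, any dominant subset of $G$ containing $n$ has its dominant variable for $n$ witnessing $n \nmid \lcm$ of the remaining elements, so it corresponds to a basis element of $\mathbb{F}$.

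The only substantive remaining step is to show that the maximum over all basis elements of $\mathbb{F}$ is attained on elements containing $n$. Given a basis element $[\sigma]$ with $\sigma \subseteq \{m_1,\ldots,m_q\}$, the defining condition $n \nmid \lcm(\sigma)$ ensures that $\sigma' = \sigma \cup \{n\}$ is again a basis element of $\mathbb{F}$ and, moreover, $\deg(\mdeg[\sigma']) \geq \deg(\mdeg[\sigma]) + 1$ while $\hdeg[\sigma'] = \hdeg[\sigma] + 1$, so $\deg(\mdeg[\sigma']) - \hdeg[\sigma'] \geq \deg(\mdeg[\sigma]) - \hdeg[\sigma]$. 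The trivial basis element $[\varnothing]$ contributes $0$, which is dominated by $[n]$ itself. Combining this reduction with the bijection of the previous paragraph yields the claimed formula.
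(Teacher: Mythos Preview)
Your proposal is correct and follows essentially the same approach as the paper's proof. Both arguments read the regularity off the basis of $\mathbb{F}$ supplied by Theorem~5.8, identify the basis elements containing $n$ with the dominant subsets of $G$ containing $n$, and handle the remaining basis elements $[m_{i_1},\ldots,m_{i_j}]$ by adjoining $n$ to obtain an element whose contribution to $\deg(\mdeg[\sigma])-\hdeg[\sigma]$ is at least as large; your exposition makes the bijection with dominant subsets a bit more explicit, but the substance is the same.
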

\begin{proof}
 Let $\{m_{r_1},\ldots,m_{r_t},n\}$ be a dominant set such that 
 \[\deg\left(\mdeg[m_{r_1},\ldots, m_{r_t},n]\right)-(t+1)=c.\] 
 Then $\reg\left(S/M\right)\geq c$. We will prove that if $b_{ij}\neq 0$, then $c\geq j-i$, which will complete the proof.
 
 There are two ways in which we might have $b_{ij}\neq 0$:\\ 
 (i) the minimal free resolution contains a basis element of the form $[m_{r_1},\ldots, m_{r_i}]$ such that $\{m_{r_1},\ldots,m_{r_i},n\}$ is dominant and
 $\deg\left(\mdeg[m_{r_1},\ldots, m_{r_i}]\right)=j$;\\
 (ii) the minimal free resolution contains a basis element of the form $[m_{s_1},\ldots, m_{s_{i-1}},n]$ such that $\{m_{s_1},\ldots, m_{s_{i-1}},n\}$ is 
 dominant and $\deg\left(\mdeg[m_{s_1},\ldots, m_{s_{i-1}},n]\right)=j$.\\
 If (i) happens, then $[m_{r_1},\ldots, m_{r_i},n]$ is also in the minimal free resolution and \[\deg\left(\mdeg[m_{r_1},\ldots, m_{r_i},n]\right)\geq
 \deg\left(\mdeg[m_{r_1},\ldots, m_{r_i}]\right)+1.\]
 It follows from the construction of $c$ that 
 \[c\geq \deg\left(\mdeg[m_{r_1},\ldots, m_{r_i},n]\right)-(i+1)\geq \deg\left(\mdeg[m_{r_1},\ldots, m_{r_i}]\right)+1-(i+1)=j-i.\]
 If (ii) happens, then it follows from the construction of $c$ that \[c\geq \deg\left(\mdeg[m_{s_1},\ldots, m_{s_{i-1}},n]\right)-i=j-i.\]
\end{proof}
\begin{example}
 Let $M=(x^3y,y^2z,xz^2,xyz)$ as in Example 5.10. Since we already know the minimal free resolution $\mathbb{F}$ of $S/M$, we can read off the numbers
 $\pd\left(S/M\right)$, $\bi\left(S/M\right)$, and $\reg\left(S/M\right)$ from $\mathbb{F}$. However, we will calculate these numbers using Corollary 5.11
 and Corollary 5.14 which, in some cases, turns out to be a faster alternative.\\
 Observe that the largest dominant sets containing the nondominant generator
 $xyz$ are $\{x^3y,xyz\}$, $\{y^2z,xyz\}$, and $\{xz^2,xyz\}$. It follows from Corollary 5.11 (i) that $\pd\left(S/M\right)=2$.\\ 
 Besides that, according to Corollary 5.11 (ii), $\btwo\left(S/M\right)$ is given by the formula:
 \begin{dmath*}
 \btwo\left(S/M\right)=\#\{[m_i,m_j]/n\nmid\mdeg[m_i,m_j]\}+\#\{[m_i,n]/n\nmid\mdeg[m_i]\}=
 \#\{\}+\#\{[x^3y,xyz];[y^2z,xyz];[xz^2,xyz]\}=3.
 \end{dmath*} 
   ($\bone\left(S/M\right)$ and $\bzero\left(S/M\right)$ are always easily obtained from $\mathbb{T}_M$.)   
   Finally, by Corollary 5.14 we have
   \[\reg\left(S/M\right)=\max\{\deg(\mdeg[x^3y,xyz])-2;
    \deg(\mdeg[y^2z,xyz])-2;
     \deg(\mdeg[xz^2,xyz])-2\}\]
     $=\max\{5-2; \ 4-2; \ 4-2\}=3$.
     
     All our calculations are consistent with the information encoded in $\mathbb{F}$, as we can easily verify. 
\end{example}

\section{2-Semidominant Ideals}
 The concepts of dominance and semidominance lead in a natural way to the more general definition of $p$-semidominance, which we give next.
\begin{definition}
 A set of monomials is called \textbf{$p$-semidominant} if it contains exactly $p$ nondominant monomials. A monomial ideal is called 
 \textbf{$p$-semidominant}  if its minimal generating set is $p$-semidominant. 
 \end{definition} 
 
 With this definition, dominant and semidominant ideals can be thought of as being 0-semidominant and 1-semidominant, 
 respectively. Sometimes, the word semidominant is used to denote 1-semidominant ideals while other times it makes reference to p-semidominant ideals in 
 general (as in the title of this paper). The meaning will be clear from the context.  
  
  In this section we will construct the minimal free resolution of 2-semidominant ideals; that is, monomial ideals $M$ with minimal
  generating set $G=\{m_1,\ldots,m_q,n_1,n_2\}$ where $m_1,\ldots,m_q$ are dominant and $n_1$ and $n_2$ are nondominant.  
  First, we want to know the character of the entries of the differential matrices of $\mathbb{T}_M$.
    \begin{lemma}
   Let $M$ be a 2-semidominant ideal. If two basis elements of a resolution of $S/M$, in consecutive homological degrees, have the same multidegree, then
   they are face and facet.
  \end{lemma}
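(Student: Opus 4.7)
The plan is to mimic the proof of Lemma 5.4, but to track the combinatorial possibilities more carefully because there are now two nondominant generators available. Write the minimal generating set of $M$ as $G=\{m_1,\ldots,m_q,n_1,n_2\}$, with the $m_i$ dominant and $n_1,n_2$ nondominant. Let $[\sigma]$ and $[\tau]$ be the two basis elements in consecutive homological degrees, say in degrees $j$ and $j-1$ respectively, with $\mdeg[\sigma]=\mdeg[\tau]$. Since $[\sigma]$ and $[\tau]$ sit in a resolution obtained from $\mathbb{T}_M$ by standard cancellations, the interpretation fixed in Remark 3.3 lets me regard them as subsets of $G$ with the same multidegrees as before any cancellations, so Lemma 4.3 still applies.

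By Lemma 4.3, $[\sigma]$ and $[\tau]$ contain exactly the same dominant monomials; write this common dominant part as $D\subseteq\{m_1,\ldots,m_q\}$. Hence $[\sigma]=D\cup S_\sigma$ and $[\tau]=D\cup S_\tau$ where $S_\sigma,S_\tau\subseteq\{n_1,n_2\}$. The homological-degree condition gives $|\sigma|=|\tau|+1$, so $|S_\sigma|=|S_\tau|+1$. Since $S_\sigma,S_\tau\subseteq\{n_1,n_2\}$, only two cases can arise: either $|S_\tau|=0$ and $S_\sigma=\{n_i\}$ for some $i\in\{1,2\}$, or $|S_\tau|=1$ with $S_\tau=\{n_i\}$ and $S_\sigma=\{n_1,n_2\}$.

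In both cases $[\tau]\subsetneq[\sigma]$ and $|\sigma|-|\tau|=1$, so $[\tau]$ is a facet of $[\sigma]$, as required. The argument is essentially a counting check once Lemma 4.3 is invoked, so the only place that requires attention is justifying that Lemma 4.3 transfers from $\mathbb{T}_M$ to a resolution obtained by standard cancellations; this follows from Remark 3.3 together with part (i) of Lemma 3.2, which guarantees that multidegrees of basis elements are preserved under standard changes of basis. There is no real obstacle: the key qualitative point is simply that with only two nondominant generators the symmetric difference of $[\sigma]$ and $[\tau]$ is forced to be a singleton inside $\{n_1,n_2\}$.
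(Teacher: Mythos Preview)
Your proof is correct and takes essentially the same approach as the paper: invoke Lemma~4.3 to force $[\sigma]$ and $[\tau]$ to share the same dominant monomials, then count cardinalities of the nondominant parts inside $\{n_1,n_2\}$ to see that one must be a facet of the other. You are slightly more explicit than the paper about why the argument transfers from $\mathbb{T}_M$ to resolutions obtained by standard cancellations (via Remark~3.3 and Lemma~3.2(i)), a point the paper's own proof leaves implicit.
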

  \begin{proof}
   Let $[\sigma]$ and $[\tau]$ be basis elements in homological degrees $j+1$ and $j$, respectively. If $\mdeg[\sigma]=\mdeg[\tau]$, then 
   $[\sigma]$ and $[\tau]$ must be generated by the same dominant monomials.
   Given that $[\sigma]$ has one more generator than $[\tau]$, if $[\tau]$ contains no nondominant generator, $[\sigma]$ must contain exactly one.
   On the other hand, if $[\tau]$ contains one nondominant generator, then $[\sigma]$ must contain both nondominant generators. The possibilities are 
   four:
   \begin{enumerate}[(i)]
    \item $[\tau]=[m_{i_1},\ldots, m_{i_j}]$; $[\sigma]=[m_{i_1},\ldots, m_{i_j},n_1]$;\\
    \item $[\tau]=[m_{i_1},\ldots, m_{i_j}]$; $[\sigma]=[m_{i_1},\ldots, m_{i_j},n_2]$;\\
    \item $[\tau]=[m_{i_1},\ldots, m_{i_{j-1}},n_1]$; $[\sigma]=[m_{i_1},\ldots, m_{i_{j-1}},n_1,n_2]$;\\
    \item $[\tau]=[m_{i_1},\ldots, m_{i_{j-1}},n_2]$; $[\sigma]=[m_{i_1},\ldots, m_{i_{j-1}},n_1,n_2]$.\\   
   \end{enumerate}
   
    In every case we see that $[\tau]$ is a facet of $[\sigma]$.
  \end{proof}

Our next goal is to prove that the basis of the minimal free resolution of $S/M$ can be obtained from the basis of its Taylor resolution by eliminating
pairs of basis elements $[\sigma]$, $[\tau]$ in an arbitrary order, where $[\tau]$ is a facet of $[\sigma]$ and $\mdeg[\sigma]=\mdeg[\tau]$,
until exhausting all possibilities.

If this idea is going to succeed, we need first to confirm that the following dangerous scenario never occurs. Suppose that 
$\left([\sigma_1],[\tau_1]\right)$ and $\left([\sigma_2],[\tau_2]\right)$ are disjoint pairs of face and facet with $\mdeg[\sigma_i]=\mdeg[\tau_i]$. 
Let $\left([\sigma_1],[\tau_1]\right)$ determine the invertible entry $a_{rs}$ of the differential matrix $\left(f_{j+1}\right)$ of $\mathbb{T}_M$.
Then eliminating $[\sigma_1]$ and $[\tau_1]$ from the basis of $\mathbb{T}_M$ is equivalent to making the standard change of basis around $a_{rs}$,
followed by the standard cancellation $0\rightarrow S[\sigma_1]\rightarrow S[\tau_1]\rightarrow0$.

Similarly, $\left([\sigma_2],[\tau_2]\right)$ define
an invertible entry $a_{cd}$ and eliminating $[\sigma_2],[\tau_2]$ from the basis of the Taylor resolution is equivalent to making a standard change of
basis around $a_{cd}$, followed by the standard cancellation $0\rightarrow S[\sigma_2]\rightarrow S[\tau_2]\rightarrow0$.

However, when we make the standard change of basis around $a_{rs}$, the entries of the matrices change. In particular, the entry $a_{cd}$ might become
noninvertible, which would prevent us from doing the standard cancellation $0\rightarrow S[\sigma_2]\rightarrow S[\tau_2]\rightarrow0$.

In the next lemma, which is analogous to Lemma 5.5, we show that this scenario is not possible for 2-semidominant ideals.
\begin{lemma}
 Let $M$ be a 2-semidominant ideal. Let $\mathbb{F}$ be a free resolution of $S/M$ obtained from $\mathbb{T}_M$ by means of standard cancellations. Let
 $a_{\tau\sigma}$ and $a_{\pi\theta}$ be two invertible entries of $\mathbb{F}$, corresponding to two disjoint pairs of basis elements 
 $\left([\sigma],[\tau]\right)$ and $\left([\theta],[\pi]\right)$ of $\mathbb{F}$, respectively. Then after making the standard cancellation 
 $0\rightarrow S[\sigma]\rightarrow S[\tau]\rightarrow0$ in $\mathbb{F}$, it is possible to make the 
 standard cancellation $0\rightarrow S[\theta]\rightarrow S[\pi]\rightarrow0$.
\end{lemma}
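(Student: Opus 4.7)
The plan is to mirror the proof of Lemma 5.5, splitting into cases according to the homological degrees of the two invertible entries. Let $[\sigma]$ and $[\tau]$ lie in homological degrees $j$ and $j-1$, and let $[\theta]$ and $[\pi]$ lie in degrees $k$ and $k-1$. When $k\notin\{j-1,j,j+1\}$ the matrix $(f_k)$ is untouched by the standard cancellation of $[\sigma]$ with $[\tau]$, so $a'_{\pi\theta}=a_{\pi\theta}$ remains invertible. When $k=j-1$ or $k=j+1$, Lemma 3.2(iv) tells us that the only change to the relevant matrix is the deletion of the column (respectively the row) indexed by $[\tau]$ (respectively $[\sigma]$); since disjointness guarantees $[\theta]\neq[\tau]$ and $[\pi]\neq[\sigma]$, the entry $a'_{\pi\theta}$ persists. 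These cases go through verbatim as in Lemma 5.5.

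The main obstacle is the case $k=j$, where $[\sigma]$ and $[\theta]$ sit in the same homological degree. Suppose, for contradiction, that $a'_{\pi\theta}=0$. By Lemma 3.2(iii) this yields $a_{\pi\theta}a_{\tau\sigma}=a_{\pi\sigma}a_{\tau\theta}$, so the invertibility of $a_{\pi\theta}$ and $a_{\tau\sigma}$ forces both $a_{\pi\sigma}$ and $a_{\tau\theta}$ to be invertible. Remark 3.5 then gives $\mdeg[\sigma]=\mdeg[\tau]=\mdeg[\theta]=\mdeg[\pi]$, and Lemma 6.2, applicable because each of the four invertible entries couples a basis element in degree $j$ with one in degree $j-1$, identifies $[\tau]$ and $[\pi]$ as facets of both $[\sigma]$ and $[\theta]$.

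The heart of the argument is the resulting combinatorial conclusion. From the case analysis in Lemma 6.2, any facet of $[\sigma]$ sharing its multidegree is obtained by removing a nondominant generator from $[\sigma]$; since $[\tau]$ and $[\pi]$ are two such distinct facets and there are only two nondominant generators $n_1,n_2$, both $n_1$ and $n_2$ lie in $[\sigma]$. Writing $[\sigma]=A\cup\{n_1,n_2\}$ with $A$ a dominant set of cardinality $j-2$, we obtain (after relabeling) $[\tau]=A\cup\{n_1\}$ and $[\pi]=A\cup\{n_2\}$. The same reasoning applied to $[\theta]$ gives $[\theta]=B\cup\{n_1,n_2\}$ for some dominant set $B$, and the inclusion $A\cup\{n_1\}=[\tau]\subset[\theta]$ together with the dominance of $A$ forces $A\subseteq B$; by cardinality $A=B$, so $[\theta]=[\sigma]$, contradicting disjointness. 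Hence $a'_{\pi\theta}\neq 0$, and by the last statement of Remark 3.5 it must be invertible, which is exactly what is needed to perform the subsequent standard cancellation.
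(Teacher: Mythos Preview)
Your proof is correct and follows essentially the same route as the paper: the same case split on $k$, the same reduction to $k=j$ via Lemma 3.2(iv), and the same contradiction from $a'_{\pi\theta}=0$ via Lemma 3.2(iii), Remark 3.5, and Lemma 6.2. The only cosmetic difference is that in the $k=j$ case the paper pivots on $[\tau]$ and $[\pi]$ (same homological degree, same dominant content, hence each contains exactly one nondominant generator) and then recovers $[\sigma]=[\theta]$, whereas you pivot on $[\sigma]$ and $[\theta]$ (each admits two equal-multidegree facets, hence each contains both $n_1,n_2$) and then match the dominant parts by cardinality; both arguments land on the same contradiction $[\sigma]=[\theta]$.
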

\begin{proof}
 $[\sigma]$ and $[\tau]$ are basis elements in homological degrees $j$ and $j-1$, respectively, for some $j$. Thus $a_{\tau\sigma}$ is an entry of the 
 differential matrix $\left(f_j\right)$ of $\mathbb{F}$. Similarly, $[\theta]$ and $[\pi]$ are basis elements in some homological degrees $k$ and $k-1$,
 and $a_{\pi\theta}$ is an entry of the differential matrix $\left(f_k\right)$ of $\mathbb{F}$.
 
 In order to prove the lemma, it is enough to show that after making the standard cancellation 
 $0\rightarrow S[\sigma]\rightarrow S[\tau]\rightarrow0$ in $\mathbb{F}$, the entry $a'_{\pi\theta}$ of the differential matrix $\left(f'_k\right)$ of the 
 new resolution $\mathbb{F}'$ is invertible.
 
 Given that only $\left(f_{j+1}\right)$, $\left(f_j\right)$ and $\left(f_{j-1}\right)$ are affected by the standard cancellation
 $0\rightarrow S[\sigma]\rightarrow S[\tau]\rightarrow0$, if $k\neq j-1,j,j+1$ then $a'_{\pi\theta}=a_{\pi\theta}$; that is, $a'_{\pi\theta}$ is invertible.
 Therefore, we only need to prove that $a'_{\pi\theta}$ is invertible in the following cases:\\
 $k=j$; $k=j-1$, $k=j+1$.
 
 Suposse $k=j$. Since $a_{\pi\theta}$ is invertible, $\mdeg[\pi]=\mdeg[\theta]$. Then  $a'_{\pi\theta}=0$ or  $a'_{\pi\theta}$ is invertible. Let us assume that 
 $a'_{\pi\theta}=0$.
 By Lemma 3.2 (iii), we have that $0=a'_{\pi\theta}=a_{\pi\theta}-\dfrac{a_{\pi\sigma}a_{\tau\theta}}{a_{\tau\sigma}}$. It follows that  
 $a_{\pi\theta}a_{\tau\sigma}= a_{\pi\sigma}a_{\tau\theta}$ and, since  $a_{\pi\theta}$ and $a_{\tau\sigma}$ are invertible, 
  $a_{\pi\sigma}$ and $a_{\tau\theta}$ must be invertible too. In particular, the fact that $a_{\pi\sigma}$ is invertible 
  implies that $\mdeg[\sigma]=\mdeg[\pi]$ which, combined with the hypothesis $\mdeg[\sigma]=\mdeg[\tau]$, implies that $\mdeg[\tau]=\mdeg[\pi]$.
  
  In particular, $[\tau]$ and $[\pi]$ contain the
 same dominant monomials and thus they differ in the nondominant monomials that define them. Since $[\tau]$ and $[\pi]$ appear in the same homological
 degree, they must contain exactly one nondominant generator each. Then $[\tau]$ and $[\pi]$ are of the form 
 $[\tau]=[m_{i_1},\ldots,m_{i_{j-1}},n_1]$; $[\pi]=[m_{i_1},\ldots,m_{i_{j-1}},n_2]$. Given that $\mdeg[\tau]=\mdeg[\theta]$, and the fact that
 $[\tau]$ and $[\theta]$ appear in homological degrees $j-1$ and $j$, respectively, it follows from Lemma 6.2 that $[\tau]$ is a facet of $[\theta]$. Thus
  $\theta$ must be of the form $[\theta]=[m_{i_1},\ldots, m_{i_{j-1}},n_1,n_2]$. Since $[\tau]$ is also a facet of $[\sigma]$, the same reasoning applies to
  $[\sigma]$, which means that $[\sigma]=[\theta]$, an absurd. We conclude that $a'_{\pi\theta}$ is invertible.
  
  The cases $k=j-1$ and $k=j+1$ are as in the proof of Lemma 5.5.  
 \end{proof}
\begin{theorem}
 Let $M$ be a 2-semidominant ideal. Let $\left([\sigma_1],[\tau_1]\right),\ldots,\left([\sigma_k],[\tau_k]\right)$ be $k$ pairs of basis elements of 
 $\mathbb{T}_M$, satisfying the following properties:
 \begin{enumerate}[(i)]
  \item $\left([\sigma_i],[\tau_i]\right)$ and $\left([\sigma_j],[\tau_j]\right)$ are disjoint, if $i\neq j$.
 \item $[\tau_i]$ is a facet of $[\sigma_i]$ for all $i=1,\ldots k$.
 \item $\mdeg[\sigma_i]=\mdeg[\tau_i]$ for all $i=1,\ldots k$.
 \end{enumerate}
  Then, starting with $\mathbb{T}_M$, it is possible to make the following sequence of standard cancellations:
 \[0\rightarrow S[\sigma_1]\rightarrow S[\tau_1]\rightarrow0,\quad\cdots\quad,0\rightarrow S[\sigma_k]\rightarrow S[\tau_k]\rightarrow0\]
\end{theorem}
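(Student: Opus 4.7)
The plan is to mimic the induction used in Theorem 5.6, with Lemma 6.3 playing the role that Lemma 5.5 played there. The only essential difference between the two settings is that in the 2-semidominant case there may be two distinct nondominant generators participating in the basis elements, which is precisely the situation Lemma 6.3 has been built to handle. Consequently, once the base case is verified, the inductive step should be almost mechanical.

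For the base case $k=2$, I would apply Lemma 6.3 with $\mathbb{F}=\mathbb{T}_M$. Hypotheses (ii) and (iii) say that $[\tau_i]$ is a facet of $[\sigma_i]$ and $\mdeg[\sigma_i]=\mdeg[\tau_i]$, so the corresponding entry $a_{\tau_i\sigma_i}$ of the Taylor differential equals $\pm\,\mdeg[\sigma_i]/\mdeg[\tau_i]=\pm 1$, hence is invertible. Disjointness is hypothesis (i). Lemma 6.3 then guarantees that the cancellation $0\rightarrow S[\sigma_1]\rightarrow S[\tau_1]\rightarrow0$ may be performed first, and that afterwards the entry $a'_{\tau_2\sigma_2}$ is still invertible, so the cancellation $0\rightarrow S[\sigma_2]\rightarrow S[\tau_2]\rightarrow0$ can also be performed.

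For the inductive step, assume the statement holds for $k=j-1$ and take $k=j$ pairs satisfying (i)--(iii). Applying the inductive hypothesis twice (once to the subfamily indexed by $1,\ldots,j-1$ and once to the subfamily indexed by $1,\ldots,j-2,j$) shows that starting from $\mathbb{T}_M$ one can always perform the first $j-2$ cancellations and reach a resolution $\mathbb{F}$ in which both of the cancellations $0\rightarrow S[\sigma_{j-1}]\rightarrow S[\tau_{j-1}]\rightarrow0$ and $0\rightarrow S[\sigma_j]\rightarrow S[\tau_j]\rightarrow0$ are individually available; in particular the entries $a_{\tau_{j-1}\sigma_{j-1}}$ and $a_{\tau_j\sigma_j}$ of $\mathbb{F}$ are both invertible, and by hypothesis (i) the pairs $\bigl([\sigma_{j-1}],[\tau_{j-1}]\bigr)$ and $\bigl([\sigma_j],[\tau_j]\bigr)$ are disjoint. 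A final application of Lemma 6.3 to $\mathbb{F}$ then allows us to first perform the $(j-1)$-st cancellation and still perform the $j$-th, which completes the induction.

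The only conceivable obstacle is the step where the two invocations of the inductive hypothesis are spliced together: one has to be certain that the resolution $\mathbb{F}$ produced by the first $j-2$ cancellations does not depend on which ``long'' sequence of $j-1$ cancellations it is extended to. This is, however, automatic from Remark 3.3 and the fact that the standard cancellations in question affect disjoint pairs of basis elements, so any two orderings of the first $j-2$ cancellations yield the same intermediate resolution $\mathbb{F}$. With that in hand the argument reduces exactly to a single application of Lemma 6.3, and no further calculation is required.
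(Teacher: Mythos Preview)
Your proof is correct and follows exactly the approach the paper takes: the paper's proof of this theorem reads simply ``Identical to the proof of Theorem 5.6,'' and what you have written is precisely that argument with Lemma 6.3 substituted for Lemma 5.5. One small remark: your final paragraph is unnecessary, and its justification is slightly off---different orderings of standard cancellations need not produce literally the same matrices---but no such claim is needed, since both applications of the inductive hypothesis perform the first $j-2$ cancellations in the \emph{same} order $1,\ldots,j-2$, so the intermediate resolution $\mathbb{F}$ is unambiguously defined.
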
 
\begin{proof}
 Identical to the proof of Theorem 5.6.
 \end{proof}
\begin{theorem}
 Let $M$ be a 2-semidominant ideal. Let $A=\left\{([\sigma_1],[\tau_1]),\ldots,([\sigma_k],[\tau_k])\right\}$ be a family of pairs of basis elements in 
 $\mathbb{T}_M$, having the following properties:
 \begin{enumerate}[(i)]
  \item $\left([\sigma_i],[\tau_i]\right)$ and $\left([\sigma_j],[\tau_j]\right)$ are disjoint, if $i\neq j$.
 \item $[\tau_i]$ is a facet of $[\sigma_i]$ for all $i=1,\ldots k$.
 \item $\mdeg[\sigma_i]=\mdeg[\tau_i]$ for all $i=1,\ldots k$.
 \item $A$ is maximal with respect to inclusion among the sets satisfying i), ii) and iii).
 \end{enumerate}
   Then a minimal free resolution $\mathbb{F}$ of $S/M$ can be obtained from $\mathbb{T}_M$ by doing all standard cancellations 
 $0\rightarrow S[\sigma]\rightarrow S[\tau]
 \rightarrow0$, with $\left([\sigma],[\tau]\right)\in A$. In symbols, 
 \[\mathbb{T}_M=\mathbb{F}\oplus\left(\bigoplus_{\left([\sigma],[\tau]\right)\in A}0\rightarrow S[\sigma]\rightarrow S[\tau]\rightarrow0\right)\]
\end{theorem}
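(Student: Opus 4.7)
The plan is to mirror the proof of Theorem 5.8, with Lemma 6.2 and Theorem 6.4 playing the roles that Lemma 5.4 and Theorem 5.6 did in the semidominant setting. First, I would appeal to Theorem 6.4: the family $A$ satisfies conditions (i), (ii), (iii), which are precisely the hypotheses of that theorem, so starting from $\mathbb{T}_M$ one can perform the entire sequence of standard cancellations $0\rightarrow S[\sigma_i]\rightarrow S[\tau_i]\rightarrow 0$ for $i=1,\ldots,k$. Let $\mathbb{F}$ denote the resolution obtained after carrying out all of them; by construction the direct sum decomposition displayed in the statement holds, so everything reduces to showing that $\mathbb{F}$ is minimal.

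Next I would argue this minimality by contradiction. If $\mathbb{F}$ were not minimal, some differential matrix of $\mathbb{F}$ would contain an invertible entry $a_{\tau\sigma}$, where $[\sigma]$ and $[\tau]$ are basis elements of $\mathbb{F}$ in consecutive homological degrees. By Remark 3.5 this forces $\mdeg[\sigma]=\mdeg[\tau]$, and then Lemma 6.2 identifies $[\tau]$ as a facet of $[\sigma]$. Thus the pair $([\sigma],[\tau])$ automatically satisfies (ii) and (iii).

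Since both $[\sigma]$ and $[\tau]$ still belong to the basis of $\mathbb{F}$, neither was removed by any of the cancellations, so neither occurs as a component of any pair in $A$. It follows that $([\sigma],[\tau])$ is disjoint from every pair already in $A$, hence $A\cup\{([\sigma],[\tau])\}$ is strictly larger than $A$ and still satisfies (i), (ii), (iii). This contradicts the maximality assumption (iv), so $\mathbb{F}$ is minimal.

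The main subtlety, more than a genuine obstacle, is confirming that Lemma 6.2 is available for $\mathbb{F}$ and not merely for $\mathbb{T}_M$. This is guaranteed by the interpretation adopted in Remark 3.3: after each standard cancellation the surviving basis of the new resolution can be identified with a subset of the simplicial complex underlying $\mathbb{T}_M$, carrying the same multidegrees and the same combinatorial descriptions in terms of dominant and nondominant generators. Under this identification the four-case analysis in the proof of Lemma 6.2 applies verbatim to pairs of basis elements of $\mathbb{F}$ in consecutive homological degrees, and the argument above goes through without modification.
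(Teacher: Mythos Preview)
Your proposal is correct and follows essentially the same route as the paper: invoke Theorem 6.4 to obtain $\mathbb{F}$, then argue by contradiction that a surviving invertible entry would produce a face--facet pair of equal multidegree (via Lemma 6.2) disjoint from all pairs in $A$, violating maximality. Your write-up is in fact more detailed than the paper's, which compresses the last step into a single sentence; in particular, your remark that Lemma 6.2 applies to $\mathbb{F}$ is already built into its statement (``of a resolution of $S/M$''), so no extra justification is needed there.
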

\begin{proof}
 By Theorem 6.4, $\mathbb{F}$ is a resolution of $S/M$. We claim that $\mathbb{F}$ is minimal. If $\mathbb{F}$ were not minimal, one of its differential 
 matrices would contain an invertible entry. That, in turn, would mean 
 that there exists a pair $([\sigma],[\tau])$ of basis elements of $\mathbb{T}_M$, such that $A\bigcup \{([\sigma],[\tau])\}$ satisfies
 conditions (i), (ii), and (iii), which contradicts (iv).
\end{proof}

We have explained that all minimal resolutions of 1-semidominant ideals, obtained from $\mathbb{T}_M$ by eliminating faces and facets of equal multidegree,
have a common basis. However, the bases of the minimal resolutions of 2-semidominant ideals, obtained in the same way,
are not unique, as the next example shows.

\begin{example}
Let $M=(x^2y^2,xz,yz)$. The only repeated multidegree is $m=x^2y^2z$, which is common to the three basis elements $[\sigma]=[x^2y^2,xz,yz]$, 
$[\tau_1]=[x^2y^2,xz]$, and $[\tau_2]=[x^2y^2,yz]$. By eliminating the pair $[\sigma]$, $[\tau_1]$ from the basis of $\mathbb{T}_M$, we obtain 
the basis of a minimal resolution of $S/M$. By eliminating the pair $[\sigma]$, $[\tau_2]$ from the basis of $\mathbb{T}_M$, we obtain a different basis 
of another minimal resolution of $S/M$.
\end{example}
\begin{theorem}(Characterization of the Scarf 2-semidominant Ideals)\\ 
 Let $M$ be a 2-semidominant ideal. Let $B=\{m:m\text{ is the multidegree of more than one basis}$\\
 $\text {element of }\mathbb{T}_M\}$.
 For each $m\in B$, let $B_m=\{[\sigma]\in \mathbb{T}_M/\text{mdeg}[\sigma]=m\}$. Then $M$ is Scarf if and only if $\#\left(B_m\right)$ is even for all 
 $m\in B$.
\end{theorem}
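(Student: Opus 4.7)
The plan is to reduce the statement to a matching problem on a four-vertex graph. By Lemma 4.3, all basis elements in $B_m$ contain the same set $D$ of dominant generators of $M$. Since $M$ has exactly two nondominant generators $n_1, n_2$, every element of $B_m$ has the form $[D \cup T]$ with $T \subseteq \{n_1, n_2\}$, so
\[
B_m \subseteq \{[D],\ [D, n_1],\ [D, n_2],\ [D, n_1, n_2]\}.
\]
The face-facet relation on this four-element set is the Hasse diagram of a Boolean lattice, namely the $4$-cycle with edges $\{[D], [D, n_1]\}$, $\{[D], [D, n_2]\}$, $\{[D, n_1], [D, n_1, n_2]\}$, and $\{[D, n_2], [D, n_1, n_2]\}$. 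By Theorem 6.5, $M$ is Scarf if and only if the maximal disjoint family $A$ of face-facet pairs of equal multidegree can be chosen to cover every element of every $B_m$; since pairs with distinct multidegrees are automatically disjoint, this reduces to requiring that the induced face-facet graph on each $B_m$ admits a perfect matching.

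I would then enumerate the configurations of $B_m$ and test matchability. Using the divisibility relations among $\lcm(D)$, $n_1$, $n_2$, and $m$, the subsets of size at least $2$ that can actually occur are: the four single-edge subsets $\{[D], [D, n_i]\}$ and $\{[D, n_i], [D, n_1, n_2]\}$ for $i \in \{1, 2\}$ (each matchable); the full size-$4$ set whose induced graph is the entire $4$-cycle (two perfect matchings); and the size-$3$ set $\{[D, n_1], [D, n_2], [D, n_1, n_2]\}$, a two-edge path whose middle vertex $[D, n_1, n_2]$ lies on both edges and therefore admits no perfect matching. Thus a perfect matching exists precisely when $\#(B_m)$ is even.

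The main obstacle is to rule out those size-$2$ subsets that are \emph{not} edges, since their existence would invalidate the ``even size'' criterion. The cases to exclude are $\{[D], [D, n_1, n_2]\}$ and $\{[D, n_1], [D, n_2]\}$. In the first, $\lcm(D) = m$ and $\lcm(D, n_1, n_2) = m$ force $n_1, n_2 \mid m$, whence $[D, n_1], [D, n_2] \in B_m$. In the second, $\lcm(D, n_1) = \lcm(D, n_2) = m$ together with $\lcm(D) \neq m$ yield $n_2 \mid m$, so $\lcm(D, n_1, n_2) = \lcm(m, n_2) = m$ and $[D, n_1, n_2] \in B_m$. In both instances $B_m$ enlarges to size $3$ or $4$, contradicting $\#(B_m) = 2$. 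Combined with the matching analysis, this establishes that $M$ is Scarf if and only if $\#(B_m)$ is even for every $m \in B$.
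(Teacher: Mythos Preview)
Your proof is correct and follows essentially the same route as the paper. Both arguments rest on Lemma~4.3 to pin down the common dominant part $D$ and hence confine $B_m$ to the four candidates $[D],[D,n_1],[D,n_2],[D,n_1,n_2]$; both invoke Theorem~6.5 to translate ``Scarf'' into ``every $B_m$ can be eliminated by face--facet cancellations''; and the decisive computation in each is the same pair of divisibility checks ruling out $\{[D],[D,n_1,n_2]\}$ and $\{[D,n_1],[D,n_2]\}$ as possible two-element configurations.

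Your perfect-matching reformulation on the $4$-cycle is a pleasant unification of the two directions (the paper treats $(\Rightarrow)$ by a one-line parity count and $(\Leftarrow)$ by casework on $\#B_m\in\{2,4\}$), but it is a repackaging rather than a different argument. One small point: you assert that the only size-$3$ configuration is $\{[D,n_1],[D,n_2],[D,n_1,n_2]\}$, yet you justify only the size-$2$ exclusions. This is harmless---your two divisibility arguments already force any $B_m$ containing both $[D]$ and $[D,n_1,n_2]$ to have size $4$, and any $B_m$ containing both $[D,n_1]$ and $[D,n_2]$ to contain $[D,n_1,n_2]$, which together dispose of the remaining size-$3$ cases---but it would be cleaner to say so explicitly.
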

\begin{proof}
 Let $G=\{m_1,\ldots,m_q,n_1,n_2\}$ be the minimal generating set of $M$. Let us denote with $\mathbb{F}$ the minimal resolution of $S/M$.\\
 $(\Rightarrow)$ Let $m\in B$. Being $M$ Scarf, all elements of $B_m$ are excluded from the basis of $\mathbb{F}$, but the elements of 
 $B_m$ are eliminataded
 in pairs, making standard cancellations. It follows that $\#\left(B_m\right)$ is even.\\
 $(\Leftarrow)$ Let $m\in B$. We need to prove that no element of the basis of $\mathbb{F}$ has multidegree $m$. Given that basis elements of $\mathbb{T}_M$
 with the same multidegree contain the same dominant monomials, what distinguishes these elements is the nondominant monomials that define them.
 Thus there are at most four basis elements of multidegree $m$; namely, 
 \[[\sigma_1]=[m_{i_1},\ldots, m_{i_r}];\quad[\sigma_2]=[m_{i_1},\ldots, m_{i_r},n_1];\]
 \[[\sigma_3]=[m_{i_1},\ldots, m_{i_r},n_2];\quad [\sigma_4]=[m_{i_1},\ldots, m_{i_r},n_1,n_2]\]
  
  The fact that $\#\left(B_m\right)$ is even implies that either\\
  (i) $\#\left(B_m\right)=4$ or (ii) $\#\left(B_m\right)=2$.\\
  (i) In this case $\left([\sigma_2],[\sigma_1]\right)$, $\left([\sigma_4],[\sigma_3]\right)$ and $\mathbb{T}_M$ satisfy the hypotheses of Theorem 6.3, which 
  means that after making the standard cancellation $0\rightarrow S[\sigma_2]\rightarrow S[\sigma_1]\rightarrow0$ in $\mathbb{T}_M$, it is still possible
  to make the cancellation $0\rightarrow S[\sigma_4]\rightarrow S[\sigma_3]\rightarrow0$. Hence, the basis of $\mathbb{F} $ does not contain elements of 
  multidegree $m$.\\
  (ii) We will show that the two basis elements with multidegree $m$ are face and facet. There are exactly two pairs of basis elements that are not face 
  and facet; these pairs are $[\sigma_2],[\sigma_3]$ and $[\sigma_1],[\sigma_4]$. If we assume that $\mdeg[\sigma_2]=\mdeg[\sigma_3]=m$, then
  $n_2\mid \mdeg[\sigma_3]=\mdeg[\sigma_2]$. It follows that $\mdeg[\sigma_4]=\mdeg[\sigma_2]$ and thus $[\sigma_4],[\sigma_2]$ and 
  $[\sigma_3]$ have multidegree $m$, which is not possible because $\#\left(B_m\right)=2$.
  
  Similarly, if $\mdeg[\sigma_1]=\mdeg[\sigma_4]$, then $n_2\mid \mdeg[\sigma_4]=\mdeg[\sigma_1]$, which implies that $[\sigma_3]$,
  $[\sigma_1]$ and $[\sigma_4]$ have multidegree $m$, which is not possible. Therefore, if $\mdeg[\sigma_i]=\mdeg[\sigma_j]=m$, then 
  $[\sigma_i]$ and $[\sigma_j]$ must be face and facet. Thus they determine an invertible entry of $\mathbb{T}_M$ and it is possible to eliminate 
  $[\sigma_i]$ and $[\sigma_j]$ from the basis of $\mathbb{T}_M$ by means of a standard cancellation. This means that no element of the basis of 
  $\mathbb{F}$ has multidegree $m$.
\end{proof}
Theorem 6.7 gives a complete characterization of the Scarf 2-semidominant ideals. This characterization, however, is 
difficult to verify in practice because it requires several calculations. In order to have a good mix between theoretical and practical results, we
include two criteria to determine whether a 2-semidominant ideal is Scarf. These two tests, although weaker than the preceding theorem, are easy to 
implement in concrete cases.
\begin{corollary}
 Let $M=(m_1,\ldots,m_q,n_1,n_2)$ be 2-semidominant. If $M$ is Scarf, then\\ $n_1, n_2\mid\lcm(m_1,\ldots,m_q)$.
 \end{corollary}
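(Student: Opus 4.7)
The plan is to argue the contrapositive using Theorem 6.7. Suppose, for definiteness, that $n_1 \nmid L$ where $L = \lcm(m_1, \ldots, m_q)$; the case $n_2 \nmid L$ will follow by swapping the roles of the two nondominant generators. I will exhibit a multidegree $\mu$ with $\#(B_\mu) = 3$, which by Theorem 6.7 forces $M$ to be non-Scarf. My candidate is $\mu = \lcm(G) = \lcm(L, n_1, n_2)$, where $G = \{m_1, \ldots, m_q, n_1, n_2\}$ is the minimal generating set of $M$.

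The first step will extract two consequences from the hypothesis and nondominance. Because $n_1 \nmid L$, some variable $x$ has its exponent in $n_1$ strictly greater than its exponent in every $m_i$; since $n_1$ is nondominant, $x$ fails to be dominant for $n_1$, and the only remaining generator that can match the $x$-exponent of $n_1$ is $n_2$. Thus the $x$-exponent of $n_2$ is at least that of $n_1$. Applying the analogous reasoning to every variable appearing in $n_j$, the nondominance of $n_j$ for $j = 1,2$ will yield $n_1 \mid \lcm(L, n_2)$ and $n_2 \mid \lcm(L, n_1)$, whence $\lcm(L, n_1) = \lcm(L, n_2) = \mu$. Consequently the three subsets $\{m_1, \ldots, m_q, n_1\}$, $\{m_1, \ldots, m_q, n_2\}$, and $G$ all give basis elements of $\mathbb{T}_M$ with multidegree $\mu$.

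The second step will rule out any further subset. For each dominant generator $m_i$, the definition of dominance furnishes a variable $x_i$ whose exponent in $m_i$ strictly exceeds its exponent in every other generator of $G$, including both $n_1$ and $n_2$; hence $m_i$ is the unique element of $G$ attaining the $x_i$-exponent of $\mu$, and any $T \subseteq G$ with $\mdeg[T] = \mu$ must contain $m_i$. The witness variable $x$ from Step 1 likewise forces $T$ to contain at least one of $n_1, n_2$. These two constraints pin $T$ down to exactly the three subsets listed above, so $\#(B_\mu) = 3$, and Theorem 6.7 delivers the non-Scarfness of $M$. The hard part will be Step 1: translating nondominance of each $n_j$ into the divisibility $n_j \mid \lcm(G \setminus \{n_j\})$ via a careful variable-by-variable analysis. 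Once that is set up, Step 2 becomes a short counting argument that leans on the strict inequality built into the definition of dominant variable.
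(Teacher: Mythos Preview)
Your proof is correct and follows essentially the same route as the paper: both arguments take $\mu=\lcm(G)$, use nondominance of each $n_j$ to get $n_j\mid\lcm(G\setminus\{n_j\})$ and hence three basis elements of multidegree $\mu$, and then invoke the parity criterion of Theorem~6.7. The only cosmetic differences are that the paper argues directly (Scarf $\Rightarrow$ $\#B_\mu$ even $\Rightarrow$ the fourth candidate $[m_1,\ldots,m_q]$ must also have multidegree $\mu$) while you run the contrapositive, and you re-derive inline the ``at most four'' bound that the paper simply cites from the proof of Theorem~6.7; your ``hard part'' (nondominance $\Rightarrow$ divides the lcm of the rest) is in fact a one-line consequence of the definition, as the paper's proof shows.
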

 \begin{proof}
  Let $m=\mdeg[m_1,\ldots,m_q,n_1,n_2]$. Since $n_1$ is nondominant, $n_1\mid \lcm(m_1,\ldots,m_q,n_2)$, which means that $m=\mdeg[m_1,\ldots,m_q,n_2]$.
  Similarly, since $n_2$ is nondominant, we must have that $n_2\mid \lcm(m_1,\ldots,m_q,n_1)$ and this implies that $m=\mdeg[m_1,\ldots,m_q,n_1]$.
  This means that at least three basis elements of $\mathbb{T}_M$ have multidegree $m$. Now, in the proof of Theorem 6.7 we showed
  that for 2-semidominant ideals, there are at most four basis elements of $\mathbb{T}_M$ with a given multidegree. In our case, the fourth candidate is
  $[m_1,\ldots,m_q]$. If $M$ is Scarf, it follows from Theorem 6.7 that the number of basis elements of $\mathbb{T}_M$ with multidegree $m$ is even.
  Thus, we must have that $m=\mdeg[m_1,\ldots,m_q]$.
  
  The last two equations imply that $n_1\mid\lcm(m_1,\ldots,m_q)$. Similarly, $n_2\mid\lcm(m_1,\ldots,m_q)$.
 \end{proof}
 \begin{corollary}
  Let $M=(m_1,\ldots,m_q,n_1,n_2)$ be 2-semidominant. If no variable appears with the same nonzero exponent in $n_1$ and $n_2$, then $M$ is Scarf.
 \end{corollary}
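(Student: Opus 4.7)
The plan is to reduce to Theorem 6.7 and then carry out a short case analysis. By Theorem 6.7 it suffices to prove that for every $m\in B$ the cardinality $\#B_m$ is even. Recall from the proof of Theorem 6.7 that any two basis elements of $\mathbb{T}_M$ sharing a multidegree must contain the same dominant generators, so if those common dominant generators are $m_{i_1},\ldots,m_{i_r}$, the basis elements of multidegree $m$ lie among
\[[\sigma_1]=[m_{i_1},\ldots,m_{i_r}],\quad [\sigma_2]=[m_{i_1},\ldots,m_{i_r},n_1],\]
\[[\sigma_3]=[m_{i_1},\ldots,m_{i_r},n_2],\quad [\sigma_4]=[m_{i_1},\ldots,m_{i_r},n_1,n_2].\]
Since $m\in B$ forces $\#B_m\geq 2$, the only thing to exclude is the possibility $\#B_m=3$.

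I will assume toward contradiction that exactly three of $[\sigma_1],\ldots,[\sigma_4]$ have multidegree $m$. The three sub-cases in which one of $[\sigma_2],[\sigma_3],[\sigma_4]$ is the excluded element are handled formally, without using the hypothesis. For instance, if $[\sigma_4]\notin B_m$ then $n_2\mid\mdeg[\sigma_3]=m$ gives $\mdeg[\sigma_4]=\lcm(\mdeg[\sigma_2],n_2)=\lcm(m,n_2)=m$, a contradiction. If $[\sigma_3]\notin B_m$ then $\mdeg[\sigma_1]=m$ yields $\mdeg[\sigma_3]=\lcm(m,n_2)$; but $\mdeg[\sigma_3]\mid\mdeg[\sigma_4]=m$, forcing $\mdeg[\sigma_3]=m$, again a contradiction. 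The case $[\sigma_2]\notin B_m$ is symmetric.

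The genuinely delicate sub-case, and the only place the hypothesis enters, is $[\sigma_1]\notin B_m$. Set $L=\mdeg[\sigma_1]=\lcm(m_{i_1},\ldots,m_{i_r})$, so $L$ strictly divides $m$, while the assumption $\mdeg[\sigma_2]=\mdeg[\sigma_3]=\mdeg[\sigma_4]=m$ becomes
\[\lcm(L,n_1)=\lcm(L,n_2)=\lcm(L,n_1,n_2)=m.\]
Pick a variable $x$ whose exponent in $m$ strictly exceeds its exponent $\ell$ in $L$, and let $a,b$ denote the exponents of $x$ in $n_1$ and $n_2$. The three equalities above give $\max(\ell,a)=\max(\ell,b)=\max(\ell,a,b)$, and since this common value exceeds $\ell$, it must equal both $a$ and $b$. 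Hence $a=b>0$, contradicting the hypothesis that no variable appears with the same nonzero exponent in $n_1$ and $n_2$. This last step is the only real obstacle; once it is settled, Theorem 6.7 closes the proof.
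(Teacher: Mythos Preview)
Your proof is correct and follows essentially the same route as the paper: reduce via Theorem~6.7 to excluding $\#B_m=3$, observe that the only viable configuration of three is $[\sigma_2],[\sigma_3],[\sigma_4]$, and then extract a variable appearing with the same nonzero exponent in $n_1$ and $n_2$. You spell out the three ``easy'' sub-cases that the paper dismisses in a parenthetical remark, but the structure and the key exponent argument are the same.
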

\begin{proof}
 If we assume that $M$ is not Scarf, then by Theorem 6.7, there is a multidegree $m$ which is common to an odd number $k>1$ of basis elements of 
 $\mathbb{T}_M$. By the proof of Theorem 6.7, there are at most four of basis elements with multidegree $m$. They are of the form 
 $[\sigma_1]=[m_{i_1},\ldots,m_{i_r}]$; $[\sigma_2]=[m_{i_1},\ldots,m_{i_r},n_1]$; $[\sigma_3]=[m_{i_1},\ldots,m_{i_r},n_2]$; 
 $[\sigma_4]=[m_{i_1},\ldots,m_{i_r},n_1,n_2]$. Now given that $k>1$ and odd, we must have $k=3$. It is easy to verify that if exactly three of the four
 elements $[\sigma_1]$, $[\sigma_2]$, $[\sigma_3]$, $[\sigma_4]$ have multidegree $m$, these elements must be $[\sigma_2]$, $[\sigma_3]$, $[\sigma_4]$
 (in any other case, that three of these elements have multidegree $m$ would imply that the fourth one has multidegree $m$ as well).
 
 The fact that $\mdeg[\sigma_1]\neq \mdeg[\sigma_2]$ implies that $n_1\nmid\lcm(m_{i_1},\ldots,m_{i_r})$.
 In particular, there is a variable $x$ such that $x$ appears with exponent $\alpha>0$ in the factorization of $n_1$ and $x^{\alpha}\nmid
 \lcm(m_{i_1},\ldots,m_{i_r})$. On the other hand, the fact that $\mdeg[\sigma_2]=\mdeg[\sigma_3]$ implies that $x^{\alpha}
 \mid\lcm(m_{i_1},\ldots,m_{i_r},n_2)$. Therefore, $x^\alpha\mid n_2$. 
 
 Let $\beta$ be the exponent with which $x$ appears in the factorization of $n_2$. Notice that if we 
 had that $\alpha<\beta$ or $\alpha>\beta$, then we would also have that $\mdeg[\sigma_2]\neq \mdeg[\sigma_3]$. Thus $x$ appears with the same
 nonzero exponent in the factorization of $n_1$ and $n_2$, a contradiction.
\end{proof}
In the context of 2-semidominant ideals, Corollary 6.9 extends a beautiful theorem by Bayer, Peeva and Sturmfels [BPS], that states the following: If $M$ is a generic ideal,
then $M$ is Scarf.\\ Let us see how Corollaries 6.8 and 6.9 work in practice.
\begin{example}
 Let $M_1=(x^3y,y^2z,yz^4,xz^2w,x^2zw)$ and $M_2=(x^3y,y^2z,yz^4,xz^2,x^2z)$. Notice that $M_1$ is 2-semidominant, $n_1=xz^2w$ and $n_2=x^2zw$ being
 the nondominant generators. Since $w$ appears in the factorization of $n_1$ but not in the factorization of any of the dominant monomials $m_1=x^3y$,
 $m_2=y^2z$, $m_3=yz^4$, we have that $n_1\nmid\text{lcm}(m_1,m_2,m_3)$. Thus, by Corollary 6.8, we have that $M_1$ is not Scarf. Now observe that $M_2$
 is also 2-semidominant, $n_1=xz^2$ and $n_2=x^2z$ being the nondominant generators. Since neither $x$ nor $z$ appears with the same nonzero exponent in the
 factorization of $n_1$ and $n_2$, it follows from Corollary 6.9 that $M_2$ is Scarf. Incidentally, note that $M_2$ is not generic.  We chose two very 
 similar ideals $M_1$ and $M_2$ to show how sensitive monomial resolutions are.
\end{example}
\section{Conclusion}
The thread that runs through the entire study of dominant, 1-semidominant and 2-semidominant ideals is the fact that their minimal resolutions can be obtained
 eliminating pairs consisting of face and facet of equal multidegree, in arbitrary order. Of course, this principle is trivial in the case of dominant ideals
 because their Taylor resolution is already minimal and in the case of semidominant ideals, this rule is eclipsed by an even stronger fact; namely, 
 semidominant ideals are Scarf.

 In both cases, however, the principle is implicit. In order to prove that $\mathbb{T}_M$ is minimal whenever $M$ is dominant, all we have to do is show 
 that it is impossible to find a face and a facet of equal multidegree (see Theorem 4.4 ($\Leftarrow$)). Thus we do not apply the rule to $\mathbb{T}_M$ but
 we certainly study $\mathbb{T}_M$ in light of it. Similarly, the proof that semidominant ideals are Scarf is based on the fact that when we make 
 random standard cancellations involving 
 faces and facets of equal multidegree, all basis elements with a repeated multidegree are eliminated.
 
 Having understood the common theme in the study of these three classes of ideals, it is natural to wonder whether 3-semidominant ideals can be resolved in 
 the same way. Unfortunately, the answer is no, as the next example shows.
 
 With the assistance of a software system (for instance, Macaulay 2 [GS]) it is easy to verify that the 3-semidominant ideal 
 $M=(x^2y^2z^2,xw^2,yw^2,zw)$ is Scarf. Now, there are six basis elements of $\mathbb{T}_M$ with multidegree $m=x^2y^2z^2w^2$ which, therefore, are excluded
 from the basis of the minimal resolution of $S/M$. However, if we eliminate pairs of face and facet of equal multidegree as follows:\\
\[\left([x^2y^2z^2,xw^2,yw^2,zw],[x^2y^2z^2,xw^2,zw]\right)\text{ first, and }\left([x^2y^2z^2,xw^2,yw^2],[x^2y^2z^2,yw^2]\right)\text{ next,}\]
then the remaining basis elements of
multidegree $m$, $[x^2y^2z^2,yw^2,zw]$ and $[x^2y^2z^2,xw^2]$, cannot be eliminated in this way because they are not face and facet. This proves that the
basis of the minimal resolution of $S/M$ cannot be obtained eliminating pairs of face and facet of equal multidegree, at random.

It remains an open problem to determine the family of all monomial ideals the basis of whose minimal resolutions can be obtained following the rule that we
are discussing. What we know though is that the family contains more ideals than the ones we described in this paper (for instance, the 3- and 4-semidominant
ideals $M_3=(xy,xz,yz)$ and $M_4=(xz,yz,xw,yw)$ are in the family). In order to expand our knowledge of this class we need to set aside the concept of
$p$-semidominant ideal and study monomial ideals under different hypotheses.

This final section is not the right place to go deep into the study of new material, but we intend to use it as the trigger of new ideas. 
Thus we conclude this article with a theorem that may inspire similar results in the same line of reasoning.
\begin{theorem}
 Let $M$ be a monomial ideal. Let us assume that for every basis element $[\tau]$ of $\mathbb{T}_M$, which is a common facet of two faces $[\sigma_1]$ and 
 $[\sigma_2]$, such that $\mdeg[\sigma_1]=\mdeg[\sigma_2]=\mdeg[\tau]=m$, the following property holds:
 \[\text{whenever }[\tau']\neq[\tau]\text{ is a facet of }[\sigma_1]\text{ or }[\sigma_2]\text{, }\mdeg[\tau']\neq m.\]
 Then the basis of the minimal resolution of $S/M$ can be obtained from the basis of $\mathbb{T}_M$, eliminating pairs of face and facet of equal 
 multidegree in arbitrary order, until exhausting all possibilities.\qed
\end{theorem}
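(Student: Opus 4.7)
The plan is to adapt the approach of Theorems 5.8 and 6.5 by propagating two structural invariants. Let $\mathbb{F}$ denote any resolution obtained from $\mathbb{T}_M$ by a sequence of standard cancellations of face/facet pairs of equal multidegree. I will prove by induction on the number of cancellations performed that (a) every face/facet pair $([\sigma],[\tau])$ of equal multidegree still surviving in $\mathbb{F}$ determines an invertible entry of the relevant differential matrix of $\mathbb{F}$; and (b) every invertible entry of $\mathbb{F}$ is determined by a pair of basis elements that form a face/facet pair (in the underlying simplex of $\mathbb{T}_M$) with equal multidegree. Both hold for $\mathbb{T}_M$ itself, because its nonzero entries are exactly the face/facet entries $\pm\mdeg[\sigma]/\mdeg[\tau]$ and, by Remark 3.5, such an entry is a unit iff the two multidegrees coincide. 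Granted (a) and (b), the conclusion is immediate: (a) guarantees the elimination process can always proceed while a face/facet pair of equal multidegree survives, the process terminates because each step removes two basis elements, and at termination (b) forces every entry of $\mathbb{F}$ into the maximal ideal, so $\mathbb{F}$ is minimal independently of the order of eliminations performed.

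For the inductive step, suppose $\mathbb{F}$ satisfies (a) and (b) and we perform a standard cancellation around the unit $a_{\tau\sigma}$ of some face/facet pair $([\sigma],[\tau])$ of multidegree $m$, with $[\sigma]\in F_{j+1}$ and $[\tau]\in F_j$. By Lemma 3.2 (iv), only $f_{j+2}$, $f_{j+1}$, $f_j$ are affected, and in $f_{j+2}$ and $f_j$ only the row of $[\sigma]$ and the column of $[\tau]$ are zeroed out and then removed, while every other entry in these two matrices is unchanged; hence (a) and (b) are trivially preserved for these matrices. The only delicate case is a disjoint face/facet pair $([\theta],[\pi])$ of equal multidegree $\mu$ that also lies in $f_{j+1}$, where Lemma 3.2 (iii) expresses the new entry as $b_{\pi\theta}=a_{\pi\theta}-a_{\pi\sigma}a_{\tau\theta}/a_{\tau\sigma}$.

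Here the hypothesis of Theorem 7.1 is used decisively. Assume $b_{\pi\theta}=0$; then $a_{\pi\sigma}$ and $a_{\tau\theta}$ must both be units. Their multidegrees, by Remark 3.5, are $m/\mu$ and $\mu/m$, which are simultaneously units only when $\mu=m$, in which case each is a nonzero scalar. Invariant (b) applied to $\mathbb{F}$ now forces $[\pi]$ to be a facet of $[\sigma]$ and $[\tau]$ to be a facet of $[\theta]$, all of multidegree $m$; thus $[\tau]$ is a common facet of the two distinct cofaces $[\sigma]$ and $[\theta]$ of multidegree $m$, and $[\pi]\neq[\tau]$ is another facet of $[\sigma]$ of multidegree $m$—precisely the configuration the hypothesis forbids. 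Hence $b_{\pi\theta}\neq 0$, and since $\mdeg[\theta]=\mdeg[\pi]$ it is a unit, preserving (a). The verification of (b) is parallel: if a new unit $b_{cd}$ in $f_{j+1}$ sat between basis elements $[v_c]$ and $[u_d]$ that are not a combinatorial face/facet pair, then by (b) for $\mathbb{F}$ the entry $a_{cd}$ must be zero, and the same calculation forces $[\tau]$ to be a common facet of $[\sigma]$ and $[u_d]$ in multidegree $m$ with $[v_c]\neq[\tau]$ a second facet of $[\sigma]$ of multidegree $m$, once again forbidden. The main obstacle is precisely this case analysis in $f_{j+1}$; the hypothesis of Theorem 7.1 is engineered to excise the unique diamond configuration that could either annihilate the unit of a surviving face/facet pair or manufacture a spurious unit between a non-face/facet pair, and everything else reduces to routine bookkeeping with Lemma 3.2.
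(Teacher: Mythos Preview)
The paper does not actually supply a proof of Theorem 7.1; it ends the statement with \qed and only remarks afterward that the argument rests on the foundational results of Section 3. Your proof is correct and is the natural extension of the method used for the special cases in Lemmas 5.5/6.3 and Theorems 5.8/6.5. The one genuine addition you make is invariant (b): in the 1- and 2-semidominant settings the paper can invoke the structural Lemmas 5.4 and 6.2 to guarantee that basis elements of equal multidegree in consecutive degrees are automatically face and facet, but no such lemma is available under the hypothesis of Theorem 7.1, and your invariant (b)---that units in the intermediate resolutions occur only at combinatorial face/facet positions---is exactly what is needed to replace them. Your deployment of the hypothesis is also correct: the forbidden configuration (a common facet $[\tau]$ of two distinct equal-multidegree cofaces, together with a second facet $[\tau']\neq[\tau]$ of one of them in the same multidegree) is precisely what Lemma 3.2 (iii) would require either to annihilate the unit at a surviving face/facet pair or to manufacture a unit at a non-face/facet position in $f_{j+1}$, and the hypothesis excludes it.
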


The proof of the theorem makes use of the foundational results of section 3. It can be shown that dominant, 1-semidominant, and 2-semidominant ideals
satisfy the hypotheses of this theorem (and so do the ideals $M_3$ and $M_4$, introduced above). This means that we could have given the minimal resolutions 
of these classes of ideals as a corollary to Theorem 7.1 but, at the expense of a minor loss of generality, we favored organization and clarity.

\bigskip

\noindent \textbf{Acknowledgements}: I want to express my gratitude to Chris Francisco and Jeff Mermin for some long conversations and helpful suggestions. 
A special thanks to my dear wife Danisa for typing many versions of this work until it reached its final form. Her support and encouragement 
made this paper possible.


\end{document}